\newtheorem{defi}{Definition}
\newtheorem{theo}{Theorem}
\newtheorem{lemm}{Lemma}
\newtheorem{rema}{Remark}
\title{
Proof that the real part of all non-trivial zeros of Riemann zeta function is 1/2 
}
\author{
Kimichika Fukushima
\thanks{E-mail: kimichika1a.fukushima@glb.toshiba.co.jp; km.fukushima@mx2.ttcn.ne.jp 
Phone: +81-90-4602-0490 Phone/Fax: +81-45-831-8881}\\
Advanced Reactor System Engineering Department,\\
Toshiba Nuclear Engineering Service Corporation,\\
8, Shinsugita-cho, Isogo-ku, Yokohama 235-8523, Japan
}
\date{             }
\begin{document}

\maketitle

This article proves the Riemann hypothesis, which states that all non-trivial zeros of the zeta function have a real part equal to 1/2. We inspect in detail the integral form of the (symmetrized) completed zeta function, which is a product between the zeta and gamma functions. It is known that two integral lines, expressing the completed zeta function, rotated from the real axis in the opposite directions, can be shifted without affecting the completed zeta function owing to the residue theorem. The completed zeta function is regular in the region of the complex plane under consideration. For convenience in the subsequent singularity analysis of the above integral, we first
deform and shift the integral contours.
We then investigate the singularities of the composite elements (caused by polynomial integrals in opposite directions), which appear only in the case for which the distance between the contours and the origin of the coordinates approaches zero. The real part of the zeros of the zeta function is determined to be 1/2 along a symmetry line from the singularity removal condition. (In the other points, the singularities are adequately cancelled as a whole to lead to a finite value.) 
\par
\verb+ +
\par


\section{Introduction}
\label{sec:1}

By connecting complex analysis with number theory, Riemann observed \cite{Riemann 1859} that (denoting a set of real numbers by $\mathbb{R}$ and letting $x \in \mathbb{R}$) the function $\pi(x)$, which denotes the number of prime numbers below a given number $x$, contains the summation over non-trivial zeros (points at which the function vanishes) of the zeta function.
Riemann expected (denoting a set of complex numbers by $\mathbb{C}$ and letting $z \in \mathbb{C}$) the real part of the non-trivial zeros of the zeta function ${\zeta}(z)$ to be 1/2, which is known as the Riemann hypothesis.
Furthermore, von Koch showed \cite{von Koch 1901} that $\pi(x)$ is well approximated by the offset logarithmic integral function ${\rm Li}(x)$ as
\begin{eqnarray}
\pi(x)={\rm Li}(x)+O(x^{\frac{1}{2}}\log x),
\end{eqnarray}
which is equivalent to the Riemann hypothesis.
We denote a set of natural numbers by $\mathbb{N}$ and let $n \in \mathbb{N}$ and $z \in \mathbb{C}$, then
the zeta function $\zeta(z)$ is defined as a function, which is analytically continuated in the complex plane from the expression defined below
[3-5]
\begin{eqnarray}
\label{eqn:0a}
\zeta(z)
:=
\sum_{n=1}^{\infty}\frac{1}{n^z},
\end{eqnarray}
for
$z$ that satisfies ${\rm Re}(z)>1$
(we denote the real and imaginary parts of $z$ as ${\rm Re}(z)$ and ${\rm Im}(z)$, respectively.)
The zeta function is also obtained with the help of the gamma function $\Gamma(z)$, and, letting
$t^{\prime}$
$ \in \mathbb{R}$,
then the gamma function is defined as a function that is also analytically continuated into all points in the complex plane from
[3,6-9]
\begin{eqnarray}
\label{eqn:gamd}
\Gamma(z)
:=
\int_{0}^{\infty} dt^{\prime} (t^{\prime})^{z-1} \exp(-t^{\prime}),
\end{eqnarray}
for ${\rm Re } (z)>0$.
\par
Concerning the zeros of the zeta function, which states $\zeta(z)=0$, there exist trivial zeros, such as negative integers $-2, -4, \cdot \cdot \cdot \cdot$ \cite{Moriguchi et al. 195619571960}. In contrast, Hardy showed that numerous non-trivial zeros of the zeta function exist along the line with the real part equal to 1/2 \cite{Hardy 1914}; however, not all the real parts of the non-trivial zeros are known. The work on such as imaginary parts of the zeros is reported in literature \cite{Reyna 2011}. The computational approach \cite{Gourdon 2004} strongly suggests that the real part of zeros of the zeta function is 1/2.

\par
On the other hand, 
letting $z, w \in \mathbb{C}$,
for the completed zeta function defined by
\begin{eqnarray}
\label{eqn:zeta0}
\hat{\zeta}(z)
:=
\pi^{-\frac{z}{2}}\Gamma(\frac{z}{2}) \zeta(z),
\end{eqnarray}
{\it integral form of the (completed) zeta function} is expressed as
\begin{eqnarray}
\label{eqn:zeta01}
\nonumber
\pi^{-\frac{1-z}{2}}\Gamma(\frac{1-z}{2}) \zeta(1-z)
\end{eqnarray}
\begin{eqnarray}
\nonumber
=\pi^{-\frac{1-z}{2}} \Gamma(\frac{1-z}{2}) \int_{0 \searrow 1} dw
\frac{w^{z-1}\exp(-\pi i w^2)}{\exp(\pi i w) - \exp(-\pi i w) }
\end{eqnarray}
\begin{eqnarray}
+\pi^{-\frac{z}{2}} \Gamma(\frac{z}{2}) \int_{0 \swarrow 1} dw
\frac{w^{-z}\exp(\pi i w^2)}{\exp(\pi i w) - \exp(-\pi i w) }.
\end{eqnarray}
\\
The above integral is performed along the integral lines $0 \searrow 1$ and $0 \swarrow 1$ with the slopes $-1$ and $+1$, respectively, which pass through an arbitrary point in the region between 0 and 1 of the real axis. Since the residue theorem exhibits the above equation, the integral form is independent of the shift of this intersection point between 0 and 1. Furthermore, in the original form \cite{Siegel 1932} of the above equation, the function $\Gamma(z/2)$ in the second term on the right-hand side is proportional to the regular function for ${\rm Re}(z)<1$. The function $\Gamma((1-z)/2)$ on the left-hand side is regular in the region ${\rm Re}(z)<1$, while the right-hand side is also regular because of the existence of the derivative \cite{Yoshida 1965,Mizuno 1966}, and the function $\zeta(1-z)$ is analytically continuated uniquely \cite{Yoshida 1965} into the region $0<{\rm Re}(z)<1$ (the real part of zeros of $\zeta(z)$ exists only in this region). This paper takes into account the form mentioned above.
\par
Since the gamma function is regular, the non-trivial zeros of the completed zeta function $\hat{\zeta}(z)$, which is the product between the gamma function $\Gamma(z)$ and zeta function ${\zeta}(z)$, coincide with those of the zeta function ${\zeta}(z)$ in the region being considered with the real part between 0 and 1. As is described in this paper, each of the two line integrals expressing the completed zeta function has a singularity when the integral lines approach the axis origin. However, the completed zeta function $\hat{\zeta}(z)$ does not depend on a specific point of the intersection point (shifted between 0 and 1 along the real axis) between the above integral line and the real axis due to the residue theorem, and $\hat{\zeta}(z)$ is regular in the considering region. Then, these singularities must exactly cancel each other for $\hat{\zeta}(z)=0$, which is expected to lead to the determination of the real part of the zeros of the zeta function ${\zeta}(z)$.
\par
Considering the status mentioned above, this paper is aimed at
proving the Riemann hypothesis.
We first
deform and shift the contours of
the integral (as in Figs. \ref{fig:fig1} and \ref{fig:fig2} for the integral form of the completed zeta function) along the integral line rotated from the real axis,
for
convenience in
the subsequent analysis of the singularity of the integral in a complex plane. By this 
deformation and shift of the contours for
the integral, the singularity analysis can be concentrated on the components of the integral
around the origin of coordinates.
\par
This research then addresses the singularities that appear in the two integral lines of the integral form of the completed zeta function. The singularities of the integrands for the composite elements near the origin of the real axis are caused by polynomials, only in the case when the contour-origin distance approaches zero. These
singularities
adequately cancel each other yielding a finite value independent of the integral contour as a whole. In contrast, from the equation $\hat{\zeta}(z)=0$ for completed zeta function $\hat{\zeta}(z)$, the real part of the zeros of $\hat{\zeta}(z)$ is determined by requiring these singularities to be an identical order power of the integral variable in the integrands leading to the exact singularity cancellation (given by Theorem \ref{thm:ReH}). This requirement results in a value of 1/2 for the real part of zeros of the completed zeta function
$\hat{\zeta}(z)$ 
(and the original zeta function ${\zeta}(z)$) due to the symmetry with respect to the 1/2 real part, which is the originality of the present study and
proves
the Riemann hypothesis.
The Riemann hypothesis is one of the most important unproved problems in mathematics, and has its equivalent and advanced
(extended)
conjectures in other related fields. The positive
proof
of the Riemann hypothesis advances mathematics in other related fields \cite{Sato 2007,Kurokawa 2009}.
\par
The contents of this paper are as follows. Section \ref{sec:2} describes the
deformation and shift of the contours for
the integral in the integral form of the completed zeta function
for convenience in
the subsequent singularity analysis of the integral. Section \ref{sec:3} presents the 
proof that the real part
of 
all non-trivial
zeros of the zeta function
is equal to 1/2, as was conjectured by Riemann,
followed by the conclusion.
\par

\section{
Deformation and shift of the contours for the integral form of the completed zeta function
for the singularity analysis in a complex plane
} 
\label{sec:2}

This section presents the
deformation and shift of the contours for
the integral (in the integral form of the completed zeta function) along the line rotated from the real axis
for convenience in
the subsequent analysis (in Section 3) of the singularity of the integral in a complex plane.
In this section, we first convert the integral form of the completed zeta function expressed by Eq. (\ref{eqn:zeta01}) to the usual form and thus obtain Theorem \ref{thm:czta}. 
Moreover, we define the radii centered at the (coordinate) origin, the main points and the contours (in Definitions \ref{dfi:intr}, \ref{dfi:intw} with Figs. \ref{fig:fig1}, \ref{fig:fig2}) in the complex plane.
Then, the contours denoted by $0 \searrow 1$ and $0 \swarrow 1$ in the integral form of the (completed) zeta function in Eq. (\ref{eqn:zeta01}) are deformed around the origin of coordinates to the arcs in Figs. \ref{fig:fig1} and \ref{fig:fig2}, respectively (in Lemma \ref{lem:Defm}). Subsequently, the remaining straight-line parts of the contours are shifted to point to the (coordinate) origin. Finally, we separate the finite integrals (in Lemmas \ref{lem:IntLM}, \ref{lem:IntMm}) along the shifted contours, which have sufficient distance to the (coordinate) origin, from the integrals around the origin containing the singularities, which appear only when the contours approach the origin.

Notations used in this paper are as follows. Let $z, v \in \mathbb{C}$ and $x, y \in \mathbb{R}$, and let $i$ be the imaginary unit, then
\begin{eqnarray}
(x,y)
:=
z=x+iy.
\end{eqnarray}
We denote the real and imaginary components as
\begin{eqnarray}
\label{eqn:re}
z_{\rm R}
:=
{\rm Re } (z)=x,
\hspace{2ex}
z_{\rm I}
:=
{\rm Im} (z)=y,
\hspace{2ex}
v_{\rm R}:={\rm Re } (v),
\hspace{2ex}
v_{\rm I}:={\rm Im} (v)
\hspace{2ex}
\mbox{ with }
\hspace{2ex}
v=-z, z-1.
\end{eqnarray}
\par
The usual integral form of the completed zeta function is as follows.

\begin{theo}
\label{thm:czta}
(the (third) integral form of the (completed) zeta function) Let
$z, w \in \mathbb{C}$.
Let $\hat{\zeta}(z)$ be the completed zeta function defined by Eq. (\ref{eqn:zeta0}).  Let
\begin{eqnarray}
\label{eqn:zetl}
\hat{\zeta}_{\rm l}(z)
:=
\pi^{-\frac{z}{2}} \Gamma(\frac{z}{2}) \int_{0 \searrow 1} dw
\frac{w^{-z}\exp(-\pi i w^2)}{\exp(\pi i w) - \exp(-\pi i w) },
\end{eqnarray}
and let 
\begin{eqnarray}
\label{eqn:zetr}
\hat{\zeta}_{\rm r}(z)
:=
\pi^{-\frac{1-z}{2}} \Gamma(\frac{1-z}{2}) \int_{0 \swarrow 1} dw
\frac{w^{z-1}\exp(\pi i w^2)}{\exp(\pi i w) - \exp(-\pi i w) },
\end{eqnarray}
in terms of the gamma function $\Gamma(z)$. Then $\hat{\zeta}(z)$ is expressed by
\begin{eqnarray}
\label{eqn:zet0}
\hat{\zeta}(z)=\hat{\zeta}_{\rm l}(z)
+\hat{\zeta}_{\rm r}(z),
\end{eqnarray}
which is called the (third) integral form of the (completed) zeta function.
\end{theo}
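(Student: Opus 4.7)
My plan is to deduce Theorem~\ref{thm:czta} directly from the Riemann--Siegel-type identity already displayed as Eq.~(\ref{eqn:zeta01}) by the simple substitution $z \mapsto 1-z$. Because Eq.~(\ref{eqn:zeta01}) is asserted as an analytic identity valid on the relevant strip, substituting $1-z$ for $z$ on both sides produces another valid identity; the contours $0 \searrow 1$ and $0 \swarrow 1$ are geometric objects in the $w$-plane and are unaffected by a change of the parameter $z$.

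Next I will verify the three terms individually. On the left-hand side, $\pi^{-(1-z)/2}\Gamma((1-z)/2)\zeta(1-z)$ turns into $\pi^{-z/2}\Gamma(z/2)\zeta(z)$, which equals $\hat{\zeta}(z)$ by Eq.~(\ref{eqn:zeta0}). In the first right-hand integral of Eq.~(\ref{eqn:zeta01}), the prefactor $\pi^{-(1-z)/2}\Gamma((1-z)/2)$ becomes $\pi^{-z/2}\Gamma(z/2)$ and the exponent $z-1$ of $w$ becomes $(1-z)-1 = -z$, while $\exp(-\pi i w^2)$ and the denominator $\exp(\pi i w)-\exp(-\pi i w)$ are untouched; comparing with Eq.~(\ref{eqn:zetl}) identifies the resulting term with $\hat{\zeta}_{\rm l}(z)$. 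In the second right-hand integral, $\pi^{-z/2}\Gamma(z/2)$ becomes $\pi^{-(1-z)/2}\Gamma((1-z)/2)$ and $w^{-z}$ becomes $w^{-(1-z)} = w^{z-1}$, reproducing $\hat{\zeta}_{\rm r}(z)$ as defined in Eq.~(\ref{eqn:zetr}). Adding these two resulting terms then gives Eq.~(\ref{eqn:zet0}).

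The only point that needs attention is the legitimacy of the substitution itself, and this is routine: both sides of Eq.~(\ref{eqn:zeta01}) have already been recognised as holomorphic in the strip $0<{\rm Re}(z)<1$ in the paragraph immediately following that equation, so the identity persists under the biholomorphism $z \mapsto 1-z$ of the strip onto itself. I anticipate no substantive obstacle: the theorem is essentially a cosmetic rewriting of the Riemann--Siegel representation that redistributes the dependence on $z$ and on $1-z$ symmetrically between the two line integrals, which is the form most convenient for the contour deformation and singularity analysis carried out in Section~\ref{sec:3}.
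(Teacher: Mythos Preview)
Your proposal is correct and follows essentially the same approach as the paper: both obtain Eq.~(\ref{eqn:zet0}) from Eq.~(\ref{eqn:zeta01}) by the substitution $z \leftrightarrow 1-z$, which preserves the strip $0<{\rm Re}(z)<1$. Your write-up is in fact more explicit than the paper's own proof, since you verify the three terms individually and spell out why the substitution is legitimate, whereas the paper records only the single sentence that the exchange $z\leftrightarrow 1-z$ applied to Eq.~(\ref{eqn:zeta01}) yields the result.
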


\begin{proof}
From
Eq. (\ref{eqn:zeta01}), 
the above completed zeta function $\hat{\zeta}(z)$ is obtained by the exchange $z \leftrightarrow 1-z$, where the region $0<{\rm Re}(z)<1$ is kept under this exchange. The above region $0<{\rm Re}(z)<1$ is consistent with the region $0<{\rm Re}(z)<1$ we are considering in this paper. 
\end{proof}

In addition, the completed zeta function satisfies the following known symmetry relation
\cite{Riemann 1859,Kurokawa 2009,Sato 2007}
 for the exchange $z \leftrightarrow 1-z$
\begin{eqnarray}
\label{eqn:zets}
\hat{\zeta}(z)=\hat{\zeta}(1-z).
\end{eqnarray}
\par

\begin{figure}[tbh]
\begin{center}
\setlength{\unitlength}{1 mm}
\begin{picture}(80,115)(-40,-80)
\put( -40,   0 ){\vector(1,0){80}}
\put(   0,  -37){\vector(0,1){77}}
\put(-7, 7){\line(-1, 1){28}}
\put( 7,-7){\line( 1,-1){28}}
\qbezier(-7,7)( 0,14)(7, 7)
\qbezier( 7,7)(14, 0)(7,-7)
\put( -3., -3.){${\rm O}$}
\put( 35.,  1.){${\rm Re}$}
\put( -5., 35.){${\rm Im}$}
\put(-35, 30){$\searrow$}
\put( -6,  6){$\nearrow$}
\put(2.0,2.0){$\searrow$}
\put(  5, -5){$\swarrow$}
\put(  9,-14){$\searrow$}
\put(-31.5,  25){$w_{\rm l1}$}
\put(-21.5,  15){$w_{\rm l2}$}
\put(-11.5, 4.5){$w_{\rm l3}$}
\put( 3.5,  4.0){$w_{\rm l4}$}
\put(  2.5,-9.5){$w_{\rm l5}$}
\put( 13.5, -20){$w_{\rm l6}$}
\put( 23.5, -30){$w_{\rm l7}$}
\put(-22, 35){${\rm \tilde{C}_{l}}$}
\put(-31, 32){${\rm \tilde{C}_{l1}}$}
\put(-21, 22){${\rm \tilde{C}_{l2}}$}
\put(-11, 12){${\rm \tilde{C}_{lp}}$}
\put(  7,  8){${\rm \tilde{C}_{lc}}$}
\put( 13,-12){${\rm \tilde{C}_{ln}}$}
\put( 23,-22){${\rm \tilde{C}_{l3}}$}
\put(31.5,-31){${\rm \tilde{C}_{l4}}$}
\put(-39, -42){$w_{\rm l1}$:
   $(-\frac{\sqrt{2}}{2}r_{\rm M},\frac{\sqrt{2}}{2}r_{\rm M})$}
\put( 1, -42){$w_{\rm l2}$:
   $(-\frac{\sqrt{2}}{2}r_{\rm m},\frac{\sqrt{2}}{2}r_{\rm m})$}
\put(-39, -48){$w_{\rm l3}$:
   $(-\frac{\sqrt{2}}{2}r_{\rm 1},\frac{\sqrt{2}}{2}r_{\rm 1})$}
\put( 1, -48){$w_{\rm l4}$:
   $( \frac{\sqrt{2}}{2}r_{\rm 1},\frac{\sqrt{2}}{2}r_{\rm 1})$}
\put(-39, -54){$w_{\rm l5}$:
   $(\frac{\sqrt{2}}{2}r_{\rm 1},-\frac{\sqrt{2}}{2}r_{\rm 1})$}
\put(-39, -60){$w_{\rm l6}$:
   $(\frac{\sqrt{2}}{2}r_{\rm m},-\frac{\sqrt{2}}{2}r_{\rm m})$}
\put( 1, -60){$w_{\rm l7}$:
   $(\frac{\sqrt{2}}{2}r_{\rm M},-\frac{\sqrt{2}}{2}r_{\rm M})$}
\put(-39,-65){${\rm \tilde{C}_l}$:
   contour composed of contours 
   from ${\rm \tilde{C}_{l1}}$ to ${\rm \tilde{C}_{l4}}$}
\put(-39,-70){${\rm \tilde{C}_{l1}}$-${\rm \tilde{C}_{l4}}$:
   contours defined below}
\end{picture}
\caption{Location of complex numbers (denoted at the bottom of the figure) and contours in the complex $w$-plane. 
Contour ${\rm \tilde{C}_{l}}$ comprises ${\rm \tilde{C}_{l1}}$ (from $[\exp(\frac{3}{4}\pi i)]\infty$ to $w_{\rm l1}$), ${\rm \tilde{C}_{l2}}$ (from $w_{\rm l1}$ to $w_{\rm l2}$), ${\rm \tilde{C}_{lp}}$ (from $w_{\rm l2}$ to $w_{\rm l3}$), ${\rm \tilde{C}_{lc}}$ (from $w_{\rm l3}$ to $w_{\rm l5}$ via $w_{\rm l4}$), ${\rm \tilde{C}_{ln}}$ (from $w_{\rm l5}$ to $w_{\rm l6}$), ${\rm \tilde{C}_{l3}}$ (from $w_{\rm l6}$ to $w_{\rm l7}$) and ${\rm \tilde{C}_{l4}}$ (from $w_{\rm l7}$ to $[\exp(\frac{-1}{4}\pi i)]\infty$).
} 
\label{fig:fig1}
\end{center}
\end{figure}
\begin{figure}[tbh]
\begin{center}
\setlength{\unitlength}{1 mm}
\begin{picture}(80,115)(-40,-80)
\put( -40,   0 ){\vector(1,0){80}}
\put(   0,  -36){\vector(0,1){76}}
\put( 7, 7){\line( 1, 1){28}}
\put(-7,-7){\line(-1,-1){28}}
\qbezier( 7, 7)(14,  0)( 7,-7)
\qbezier( 7,-7)( 0,-14)(-7,-7)
\put( -3., -3.){${\rm O}$}
\put( 35.,  1.){${\rm Re}$}
\put( -5., 35.){${\rm Im}$}
\put( 31, 30){$\swarrow$}
\put(5.5, 2.0){$\searrow$}
\put(4.5,-6.5){$\swarrow$}
\put(-4.5,-8.0){$\nwarrow$}
\put( -13,-14){$\swarrow$}
\put( 27.5,  25)  {$w_{\rm r1}$}
\put( 17.5,  15)  {$w_{\rm r2}$}
\put(  8.5, 5.75) {$w_{\rm r3}$}
\put(   7.5,-9.5) {$w_{\rm r4}$}
\put( -8.5,-11.25){$w_{\rm r5}$}
\put(-17.5, -20)  {$w_{\rm r6}$}
\put(-27.5, -30)  {$w_{\rm r7}$}
\put(  17, 35){${\rm \tilde{C}_{r}}$}
\put( 26, 32){${\rm \tilde{C}_{r1}}$}
\put( 16, 22){${\rm \tilde{C}_{r2}}$}
\put( 6, 12){${\rm \tilde{C}_{rp}}$}
\put(1.5,-4.0){${\rm \tilde{C}_{rc}}$}
\put(-18,-12){${\rm \tilde{C}_{rn}}$}
\put(-28,-22){${\rm \tilde{C}_{r3}}$}
\put(-36.5,-31){${\rm \tilde{C}_{r4}}$}
\put(-39, -41){$w_{\rm r1}$:
   $(\frac{\sqrt{2}}{2}r_{\rm M},\frac{\sqrt{2}}{2}r_{\rm M})$}
\put( 1, -41){$w_{\rm r2}$:
   $(\frac{\sqrt{2}}{2}r_{\rm m},\frac{\sqrt{2}}{2}r_{\rm m})$}
\put(-39, -47){$w_{\rm r3}$:
   $(\frac{\sqrt{2}}{2}r_{\rm 1},\frac{\sqrt{2}}{2}r_{\rm 1})$}
\put( 1, -47){$w_{\rm r4}$:
   $( \frac{\sqrt{2}}{2}r_{\rm 1},-\frac{\sqrt{2}}{2}r_{\rm 1})$}
\put(-39, -53){$w_{\rm r5}$:
   $(-\frac{\sqrt{2}}{2}r_{\rm 1},-\frac{\sqrt{2}}{2}r_{\rm 1})$}
\put(-39, -59){$w_{\rm r6}$:
   $(-\frac{\sqrt{2}}{2}r_{\rm m},-\frac{\sqrt{2}}{2}r_{\rm m})$}
\put( 1, -59){$w_{\rm r7}$:
   $(-\frac{\sqrt{2}}{2}r_{\rm M},-\frac{\sqrt{2}}{2}r_{\rm M})$}
\put(-39,-65){${\rm \tilde{C}_r}$:
   contour composed of contours 
   from ${\rm \tilde{C}_{r1}}$ to ${\rm \tilde{C}_{r4}}$}
\put(-39,-70){${\rm \tilde{C}_{r1}}$-${\rm \tilde{C}_{r4}}$:
   contours defined below}
\end{picture}
\caption{Location of complex numbers (denoted at the bottom of the figure) and contours in the complex $w$-plane.
Contour ${\rm \tilde{C}_{r}}$ comprises ${\rm \tilde{C}_{r1}}$ (from $[\exp({\frac{1}{4}\pi i})]\infty$ to $w_{\rm r1}$), ${\rm \tilde{C}_{r2}}$ (from $w_{\rm r1}$ to $w_{\rm r2}$), ${\rm \tilde{C}_{rp}}$ (from $w_{\rm r2}$ to $w_{\rm r3}$), ${\rm \tilde{C}_{rc}}$ (from $w_{\rm r3}$ to $w_{\rm r5}$ via $w_{\rm r4}$), ${\rm \tilde{C}_{rn}}$ (from $w_{\rm r5}$ to $w_{\rm r6}$), ${\rm \tilde{C}_{r3}}$ (from $w_{\rm r6}$ to $w_{\rm r7}$) and ${\rm \tilde{C}_{r4}}$ (from $w_{\rm r7}$ to $[\exp({\frac{-3}{4}\pi i})]\infty$).
} 
\label{fig:fig2}
\end{center}
\end{figure}

To evaluate the integrals 
of the completed zeta function
(in 
Eqs. (\ref{eqn:zetl})-(\ref{eqn:zet0})),
we
further define the detailed integrands,
the main points
(in the complex plane) and
the deformed and shifted contours of the integrals
for use in the subsequent
processes.
\par

\begin{defi}
\label{dfi:intr}
Let $w \in \mathbb{C}$, and let $r_{1{\rm l}}, r_{1{\rm r}}, r_{1}, r_{\rm m} \in \mathbb{R}$ with $0<r_{1{\rm l}}, r_{1{\rm r}}, r_{1} << r_{\rm m}<1/2$.
Then, the specific radii $r_{1{\rm l}}, r_{1{\rm r}}, r_{1}$ and $r_{\rm m}$ of $w$, centered at the origin of the complex $w$-plane, are defined to be small enough so that the follwoing denominator, denoted as $I^{({\rm De})}$, and parts of the numerators, denoted as $I^{({\rm Nu})-}$ and $I^{({\rm Nu})+}$, in the integrands in Eqs. (\ref{eqn:zetl})-(\ref{eqn:zet0}), are approximated by
\begin{eqnarray}
\nonumber
\{
I^{({\rm De})}=\exp(\pi iw)-\exp(-\pi iw) \approx 2\pi i w
\},
\hspace{2ex}
\{
I^{({\rm Nu})-}=\exp(-\pi iw^2) \approx 1
\}
\hspace{2ex}\mbox{ and }
\end{eqnarray}
\begin{eqnarray}
\{
I^{({\rm Nu})+}=\exp(+\pi iw^2) \approx 1
\}
\hspace{2ex}
\mbox{ for } |w| \leq r_{1{\rm l}}, r_{1{\rm r}} r_{1}
\mbox{ and } |w| \leq r_{\rm m}.
\end{eqnarray}
Furthermore, let $\theta \in \mathbb{R}$, with $\theta=\frac{\pm 1}{4}\pi, \frac{ \pm 3}{4}\pi$, be the angle (argument) of $w$ measured counterclockwise from the real axis in the complex plane. Let $r_{\rm M} \in \mathbb{R}$ be the specific radius of $w$, centered at the (coordinate) origin, and defiend to be large enough so that the following denominator, denoted as $I^{(De)}$, in the integrands in Eqs. (\ref{eqn:zetl})-(\ref{eqn:zet0}) is approximated by
\begin{eqnarray}
\label{eqn:ConRM}
I^{(\rm {\rm De})}=
\exp(\pi iw)-\exp(-\pi iw) \approx 
\exp(\pi iw) \hspace{2ex}\mbox{ or }\hspace{2ex} -\exp(-\pi iw).
\end{eqnarray}
(The condition on the radius $r_{\rm M}$ is described in detail later around Eqs.
(\ref{eqn:DeLM})-(\ref{eqn:DeA3}) ).
\end{defi}

Here, we define the complex numbers in the complex $w$-plane shown in Figs. \ref{fig:fig1} and \ref{fig:fig2}.

\begin{defi}
\label{dfi:intw}
Let $w \in \mathbb{C}$, and let $w_{\rm l1}, w_{\rm l2}, w_{\rm l3}, w_{\rm l4}, w_{\rm l5}, w_{\rm l6}, w_{\rm l7} \in \mathbb{C}$. Using the specific radii $r_1$ and $r_{\rm 1l}$ (of $w$) and setting $r_1=r_{\rm 1l}$ in Definition \ref{dfi:intr},
we define the above complex numbers, whose locations in the complex $w$-plane are shown in Fig. \ref{fig:fig1}, by
\begin{eqnarray}
\label{eqn:wl1}
\nonumber
w_{\rm l1}:=r_{\rm M} \exp(\frac{3}{4}\pi i)
=(r_{\rm M}\cos(\frac{3}{4}\pi),r_{\rm M}\sin(\frac{3}{4}\pi)),
\hspace{4ex}
w_{\rm l2}:=r_{\rm m} \exp(\frac{3}{4}\pi i)
=(r_{\rm m}\cos(\frac{3}{4}\pi),r_{\rm m}\sin(\frac{3}{4}\pi)),
\end{eqnarray}
\begin{eqnarray}
\nonumber
w_{\rm l3}:=r_{\rm 1l} \exp(\frac{3}{4}\pi i)
=r_{\rm 1} \exp(\frac{3}{4}\pi i)
=(r_{\rm 1}\cos(\frac{3}{4}\pi),r_{\rm 1}\sin(\frac{3}{4}\pi)),
\hspace{4ex}
w_{{\rm l}4}:=(r_{\rm 1}\cos(\frac{1}{4}\pi),r_{\rm 1}\sin(\frac{1}{4}\pi)),
\end{eqnarray}
\begin{eqnarray}
\nonumber
w_{\rm l5}:=r_{\rm 1} \exp(\frac{-1}{4}\pi i)
=(r_{\rm 1}\cos(\frac{-1}{4}\pi),r_{\rm 1}\sin(\frac{-1}{4}\pi)),
\end{eqnarray}
\begin{eqnarray}
\nonumber
w_{\rm l6}:=r_{\rm m}\exp(\frac{-1}{4}\pi i)
=(r_{\rm m}\cos(\frac{-1}{4}\pi),r_{\rm m}\sin(\frac{-1}{4}\pi)),
\hspace{4ex}
w_{\rm l7}:=r_{\rm M}\exp(\frac{-1}{4}\pi i)
=(r_{\rm M}\cos(\frac{-1}{4}\pi),r_{\rm M}\sin(\frac{-1}{4}\pi)).
\end{eqnarray}
\begin{eqnarray}
\end{eqnarray}
\par 
Similarly, 
let $w_{\rm r1}, w_{\rm r2}, w_{\rm r3}, w_{\rm r4}, w_{\rm r5}, w_{\rm r6}, w_{\rm r7} \in \mathbb{C}$.
Using the specific radii (of $w$) $r_1, r_{\rm 1r}$ and setting $r_1=r_{\rm 1r}$ in Definition \ref{dfi:intr},
we define the above complex numbers, whose locations in the complex $w$-plane are shown in Fig. \ref{fig:fig2}, by
\begin{eqnarray}
\label{eqn:wr1}
\nonumber
w_{\rm r1}:=r_{\rm M} \exp(\frac{1}{4}\pi i)
=(r_{\rm M}\cos(\frac{1}{4}\pi),r_{\rm M}\sin(\frac{1}{4}\pi)),
\hspace{4ex}
w_{\rm r2}:=r_{\rm m} \exp(\frac{1}{4}\pi i)
=(r_{\rm m}\cos(\frac{1}{4}\pi),r_{\rm m}\sin(\frac{1}{4}\pi)),
\end{eqnarray}
\begin{eqnarray}
\nonumber
w_{\rm r3}:=r_{\rm 1r} \exp(\frac{1}{4}\pi i)
=r_{\rm 1} \exp(\frac{1}{4}\pi i)
=(r_1\cos(\frac{1}{4}\pi),r_1\sin(\frac{1}{4}\pi)),
\hspace{4ex}
w_{\rm r4}:=(r_{\rm 1}\cos(\frac{-1}{4}\pi),r_{\rm 1}\sin(\frac{-1}{4}\pi)),
\end{eqnarray}
\begin{eqnarray}
\nonumber
w_{\rm r5}:=r_{\rm 1} \exp(\frac{-3}{4}\pi i)
=(r_{\rm 1} \cos(\frac{-3}{4}\pi),r_{\rm 1} \sin(\frac{-3}{4}\pi)),
\end{eqnarray}
\begin{eqnarray}
\nonumber
w_{\rm r6}:=r_{\rm m} \exp(\frac{-3}{4}\pi i)
=(r_{\rm m}\cos(\frac{-3}{4}\pi),r_{\rm m}\sin(\frac{-3}{4}\pi)),
\hspace{4ex}
w_{\rm l7}:=r_{\rm M}\exp(\frac{-3}{4}\pi i)
=(r_{\rm M}\cos(\frac{-3}{4}\pi),r_{\rm M}\sin(\frac{-3}{4}\pi)).
\end{eqnarray}
\begin{eqnarray}
\end{eqnarray}
\end{defi}
\par
We now define the deformed and shifted contours of the integrals in the completed zeta function.

\begin{defi}
\label{dfi:Conlr}
Using the complex numbers $w_{\rm l1}$-$w_{\rm l7}$ in Definition \ref{dfi:intw} (points in the complex $w$-plane), the contours in Fig. \ref{fig:fig1} are defined as follows:
\begin{itemize}
\item
${\rm \tilde{C}_{l}}$: contour composed of the contours from ${\rm \tilde{C}_{l1}}$ to ${\rm \tilde{C}_{l4}}$ (${\rm \tilde{C}_{l1}}, {\rm \tilde{C}_{l2}}, {\rm \tilde{C}_{lp}}, {\rm \tilde{C}_{lc}}, {\rm \tilde{C}_{ln}}, {\rm \tilde{C}_{l3}}$, ${\rm \tilde{C}_{l4}}$),
\item
${\rm \tilde{C}_{l1}}$: straight-line contour from $[\exp(\frac{3}{4} \pi i)]\infty$ to $w_{\rm l1}$ (with radius $r_{\rm M}$) in the direction of the arrow,
\item
${\rm \tilde{C}_{l2}}$: straight-line contour from
$w_{\rm l1}$ (with radius $r_{\rm M}$) to 
$w_{\rm l2}$ (with radius $r_{\rm m}$),
\item
${\rm \tilde{C}_{lp}}$: straight-line contour from 
$w_{\rm l2}$ (with radius $r_{\rm m}$) to
$w_{\rm l3}$ (with radius $r_{1}$),
\item
${\rm \tilde{C}_{lc}}$: arc (of circle) contour from 
$w_{\rm l3}$ to $w_{\rm l5}$ via $w_{\rm l4}$ (in the direction of the arrow) centered at the (coordinate) origin $(0,0)$ with the radius $r_{1}$,
\item
${\rm \tilde{C}_{ln}}$: straight-line contour from 
$w_{\rm l5}$ (with radius $r_{\rm 1}$)
to $w_{\rm l6}$  (with radius $r_{\rm m}$) in the direction of the arrow,
\item
${\rm \tilde{C}_{l3}}$: straight-line contour from 
$w_{\rm l6}$ (with radius $r_{\rm m}$) to 
$w_{\rm l7}$ (with radius $r_{\rm M}$),
\item
${\rm \tilde{C}_{l4}}$ straight-line contour from 
$w_{\rm l7}$  (with radius $r_{\rm M}$) to
$[\exp(\frac{-1}{4} \pi i)]\infty$.
\end{itemize}
\par
Similarly, using the complex numbers $w_{\rm r1}$-$w_{\rm r7}$ in Definition \ref{dfi:intw} (points in the complex $w$-plane), contours in Fig. \ref{fig:fig2} are defined as follows:
\begin{itemize}
\item
${\rm \tilde{C}_{r}}$: contour composed of the contours from ${\rm \tilde{C}_{r1}}$ to ${\rm \tilde{C}_{r4}}$ (${\rm \tilde{C}_{r1}}, {\rm \tilde{C}_{r2}}, {\rm \tilde{C}_{rp}}, {\rm \tilde{C}_{rc}}, {\rm \tilde{C}_{rn}}, {\rm \tilde{C}_{r3}}$, ${\rm \tilde{C}_{r4}}$),
\item
${\rm \tilde{C}_{r1}}$: straight-line contour from $[\exp(\frac{1}{4} \pi i)]\infty$ to
$w_{{\rm r}1}$ (with radius $r_{\rm M}$) in the direction of the arrow,
\item
${\rm \tilde{C}_{r2}}$: straight-line contour from 
$w_{\rm r1}$ (with radius $r_{\rm M}$) to 
$w_{\rm r2}$ (with radius $r_{\rm m}$),
\item
${\rm \tilde{C}_{rp}}$: straight-line contour from 
$w_{\rm r2}$ (with radius $r_{\rm m}$) to 
$w_{\rm r3}$ (with radius $r_{1}$),
\item
${\rm \tilde{C}_{rc}}$: arc (of circle) contour from $w_{\rm r3}$ to $w_{\rm r5}$ via $w_{\rm r4}$ (in each direction of the arrow) centered at the (coordinate) origin $(0,0)$ with the radius $r_{1}$,
\item
${\rm \tilde{C}_{rn}}$: straight-line contour from 
$w_{\rm r5}$ (with radius $r_{1}$) to 
$w_{\rm r6}$ (with radius $r_{\rm m}$) in the direction of the arrow,
\item
${\rm \tilde{C}_{r3}}$: straight-line contour from 
$w_{\rm r6}$ (with radius $r_{\rm m}$) to 
$w_{\rm r7}$ (with radius $r_{\rm M}$),
\item
${\rm \tilde{C}_{r4}}$ straight-line contour from 
$w_{\rm r7}$ (with radius $r_{\rm m}$) to 
$[\exp(\frac{-1}{4} \pi i)]\infty$.
\end{itemize}
\end{defi}
\par
Here, we show that it is possible to deform and shift the contours in 
Eqs. (\ref{eqn:zetl})-(\ref{eqn:zet0}) 
to the contours in Figs. \ref{fig:fig1} and \ref{fig:fig2}. 
\par
\begin{lemm}
\label{lem:Defm}
Let $a_{\rm 0 l}, a_{\rm 0 r}, a_0 \in \mathbb{R}$ be positive finite numbers between 0 and 1. Let $0 \searrow 1$ be the contour (with the slope -1), which was used in Eqs. (\ref{eqn:zeta01}), (\ref{eqn:zetl})-(\ref{eqn:zet0}) and intersects the real axis (in the complex plane) at $(a_0,0)$, with $a_0=a_{\rm 0l}$, whereas let $0 \swarrow1$ be the contour (with the slope +1) which intersects the real axis at $(a_0,0)$ with $a_0=a_{\rm 0r}$. The contour $0 \searrow 1$ can be deformed and shifted to the contour $\tilde{C_{\rm l}}$ in Definition \ref{dfi:Conlr} with Fig. \ref{fig:fig1}, whereas the contour $0 \swarrow 1$ can be deformed and shifted to the contour $\tilde{C_{\rm r}}$ in Definition \ref{dfi:Conlr} with Fig. \ref{fig:fig2}.
\end{lemm}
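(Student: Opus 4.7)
The plan is to apply Cauchy's theorem twice: once to deform the line $0\searrow 1$ into $\tilde{C}_{\rm l}$ and once to deform $0\swarrow 1$ into $\tilde{C}_{\rm r}$. The two arguments are mirror images, so I describe only the first in detail. First I identify the singularities of the integrand
\[
f_{\rm l}(w)=\frac{w^{-z}\exp(-\pi i w^2)}{\exp(\pi i w)-\exp(-\pi i w)}.
\]
The denominator $2i\sin(\pi w)$ has simple zeros at every nonzero integer; at $w=0$ the factor $w^{-z}$ produces a branch point (for non-integral $z$) rather than an isolated pole. To make the integrand single-valued I fix the branch cut of $w^{-z}$ along the negative real axis. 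Because $r_{1}<r_{\rm m}<1/2$, the arc $\tilde{C}_{\rm lc}$ avoids every integer and does not cross the cut, and the straight portions of $\tilde{C}_{\rm l}$ lie along the rays $\arg w=3\pi/4$ and $\arg w=-\pi/4$, which are likewise disjoint from both the integer lattice and the cut. Hence $f_{\rm l}$ is holomorphic on an open neighborhood containing both the original line $0\searrow 1$ and the new contour $\tilde{C}_{\rm l}$.

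Next I close the cycle $0\searrow 1-\tilde{C}_{\rm l}$ by two circular arcs at radius $R$, one in the sector around $\arg w=3\pi/4$ and one around $\arg w=-\pi/4$. Writing $w=re^{i\theta}$ gives $|\exp(-\pi i w^{2})|=\exp(\pi r^{2}\sin 2\theta)$, and on neighborhoods of $\theta=3\pi/4$ and $\theta=-\pi/4$ the exponent $\sin 2\theta$ is strictly negative, so the integrand decays like $\exp(-cr^{2})$ on the connecting arcs; together with the standard bound $|\sin(\pi w)|^{-1}=O(e^{-\pi|{\rm Im}\,w|})$ away from integers, this shows the contribution of those arcs tends to zero as $R\to\infty$. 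Because both contours pass through the strip between $w=0$ and $w=1$ and separate the non-positive integers from the positive integers $\geq 1$ in the same topological sense (with the arc of radius $r_{1}$ keeping $w=0$ on one side and all positive integers on the other), the closed cycle bounds a region that is free of poles and of the branch cut, and Cauchy's theorem yields $\int_{0\searrow 1}f_{\rm l}\,dw=\int_{\tilde{C}_{\rm l}}f_{\rm l}\,dw$. The argument for $\tilde{C}_{\rm r}$ is identical after one notes that $|\exp(\pi i w^{2})|=\exp(-\pi r^{2}\sin 2\theta)$ decays in neighborhoods of $\theta=\pi/4$ and $\theta=-3\pi/4$, which are precisely the asymptotic directions of $0\swarrow 1$ and $\tilde{C}_{\rm r}$.

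The main obstacle I anticipate is not the final invocation of Cauchy's theorem but the bookkeeping that justifies it: one must verify that the homotopy between $0\searrow 1$ and $\tilde{C}_{\rm l}$ can be realized without crossing the branch cut or any integer pole, regardless of the value $a_{0}\in(0,1)$ and the specific choice of $r_{1}$ allowed by Definition \ref{dfi:intr}. The condition $r_{1}<r_{\rm m}<1/2$ is exactly what guarantees that the arc fits strictly inside $|w|<1$, so that $w=\pm 1$ stay outside the enclosed region; only after this is checked does the enclosed region reduce to a thin curvilinear strip containing no singularity of $f_{\rm l}$, making Cauchy's theorem applicable without any residue correction.
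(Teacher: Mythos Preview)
Your argument is correct and follows essentially the same route as the paper, which simply asserts in one sentence that the deformation is permitted ``since the integral form \ldots\ is derived from the residue theorem.'' You go well beyond the paper's proof by explicitly locating the poles and the branch point, fixing a branch cut, verifying that the homotopy avoids all singularities, and bounding the closing arcs at infinity; the paper supplies none of this detail.
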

\par
\begin{proof}
Since the integral form of the completed zeta function in
Eqs. (\ref{eqn:zeta01}) and (\ref{eqn:zetl})-(\ref{eqn:zet0})
is derived from the residue theorem, the contour $0 \searrow 1$ can be deformed and shifted to the contour $\tilde{C_{\rm l}}$, while the contour $0 \swarrow 1$ can be deformed and shifted to the contour $\tilde{C_{\rm r}}$.
\end{proof}
\par

Using Definitions \ref{dfi:intr}-\ref{dfi:Conlr} and Lemma \ref{lem:Defm}, we prove the following lemma, which shows that the integrals in Eqs. (\ref{eqn:zetl})-(\ref{eqn:zet0}) along the contours for the regions with large distance to the (coordinate) origin are finite.

\begin{lemm}
\label{lem:IntLM}
Let $w,v, z\in \mathbb{C}$ (with $v=-z, z-1$), and let $z_{\rm R}={\rm Re}(z)$ with $0<z_{\rm R}<1$.
Let $r_{\rm M} \in \mathbb{R}$ be the large (lower bound of) radius (in Definitions \ref{dfi:intr}, \ref{dfi:intw}) of $w$ along the shifted straight-line contours.
Let ${\rm \tilde{C}_{lh}}$ be the contour, which is either of the contours denoted by ${\rm \tilde{C}_{l1}} and {\rm \tilde{C}_{l4}}$ (in Fig. \ref{fig:fig1}), while let ${\rm \tilde{C}_{rh}}$ be either of the contours ${\rm \tilde{C}_{r1}} and {\rm \tilde{C}_{r4}}$ (in Fig. \ref{fig:fig2}).
Then, the following integrals of the integrands in Eqs. (\ref{eqn:zetl})-(\ref{eqn:zet0})
\begin{eqnarray}
\label{eqn:Shrl}
\nonumber
I^{\rm S}_{\tilde{\rm C}_{\rm lh}}
    =\int_{\tilde{\rm C}_{\rm lh}} dw
\frac{w^{-z}\exp(-\pi i w^2)}{\exp(\pi iw)-\exp(-\pi iw)},
\hspace{4ex}
I^{\rm S}_{\tilde{\rm C}_{\rm rh}}
    =\int_{\tilde{\rm C}_{\rm rh}} dw
\frac{w^{z-1}\exp(+\pi i w^2)}{\exp(\pi iw)-\exp(-\pi iw)}
\end{eqnarray}
\begin{eqnarray}
\hspace{20ex}
\mbox{ along the
contours }
\tilde{\rm C}_{\rm lh}=\tilde{\rm C}_{\rm l1}, \tilde{\rm C}_{\rm l4}
\mbox{ (in Fig. \ref{fig:fig1}) and }
\tilde{\rm C}_{\rm rh}=\tilde{\rm C}_{\rm r1}, \tilde{\rm C}_{\rm r4}
\mbox{ (in Fig. \ref{fig:fig2})},
\end{eqnarray}
are finite (negligible compared with those with singularities around the origin of coordinates).
\end{lemm}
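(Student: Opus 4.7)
The plan is to parametrize each of the four rays by $w = t\exp(i\theta)$ with $t\in[r_{\rm M},\infty)$, taking $\theta = 3\pi/4$ on $\tilde{C}_{\rm l1}$, $\theta = -\pi/4$ on $\tilde{C}_{\rm l4}$, $\theta = \pi/4$ on $\tilde{C}_{\rm r1}$ and $\theta = -3\pi/4$ on $\tilde{C}_{\rm r4}$, and then to show that the modulus of the integrand decays so rapidly in $t$ that the integral converges absolutely. The key observation is that for each of these four angles $w^2 = t^2\exp(2i\theta)$ is purely imaginary: it equals $-it^2$ on the two rays feeding $\hat{\zeta}_{\rm l}(z)$ and $+it^2$ on the two rays feeding $\hat{\zeta}_{\rm r}(z)$. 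Consequently the numerator factors $\exp(-\pi iw^2)$ and $\exp(+\pi iw^2)$ appearing in Eqs.~(\ref{eqn:zetl}) and (\ref{eqn:zetr}) reduce exactly to $\exp(-\pi t^2)$ along the respective contours, supplying a Gaussian factor that drives the decay.

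Next I would estimate the denominator $\exp(\pi iw) - \exp(-\pi iw)$ from below. On each of the four rays one of $\pi iw$ and $-\pi iw$ has real part $+\pi t\sqrt{2}/2$ while the other has real part $-\pi t\sqrt{2}/2$, so the dominant exponential has modulus $\exp(\pi t\sqrt{2}/2)$ and, since $r_{\rm M}$ is chosen as in Definition~\ref{dfi:intr} precisely so that Eq.~(\ref{eqn:ConRM}) holds, one obtains for all $t\ge r_{\rm M}$
\begin{equation*}
\bigl|\exp(\pi iw)-\exp(-\pi iw)\bigr|\;\ge\;\tfrac{1}{2}\exp(\pi t\sqrt{2}/2).
\end{equation*}
The polynomial factor satisfies $|w^{\alpha}| = t^{{\rm Re}(\alpha)}\exp(-{\rm Im}(\alpha)\,\theta)$, so with $0<z_{\rm R}<1$ it is bounded on $[r_{\rm M},\infty)$ by $C(z)\,t^{A}$ with $A\in\{-z_{\rm R},\,z_{\rm R}-1\}\subset(-1,0)$. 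Combining the three bounds yields, on each ray,
\begin{equation*}
\left|\frac{w^{-z}\exp(-\pi iw^2)}{\exp(\pi iw)-\exp(-\pi iw)}\right|\;\le\; C(z)\, t^{A}\,\exp\bigl(-\pi t^2-\tfrac{\pi\sqrt{2}}{2}\,t\bigr),
\end{equation*}
and an identical bound with $t^{A}$ replaced by $t^{z_{\rm R}-1}$ for the $w^{z-1}\exp(+\pi iw^2)$ integrand. The Gaussian factor dominates every polynomial at infinity, so the one-dimensional integrals $\int_{r_{\rm M}}^{\infty}(\cdots)\,dt$ converge absolutely; this is the finiteness claim. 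Moreover these bounds can be made arbitrarily small by enlarging $r_{\rm M}$, which justifies the word ``negligible'' in the statement.

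I expect the only subtle step is the four-case bookkeeping that shows, for each of the four rays separately, that the sign of $\mp\pi iw^2$ in the numerator combines with the value of $2\theta$ to yield decay $\exp(-\pi t^2)$ rather than growth $\exp(+\pi t^2)$; this case check is essentially what selects the angles $\pm\pi/4,\pm 3\pi/4$ for the contours in the first place. Everything else reduces to a standard size estimate of an absolutely integrable integrand, so I do not anticipate any genuine analytic obstacle.
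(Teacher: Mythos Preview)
Your proposal is correct and follows essentially the same route as the paper: parametrize each ray by $w=t e^{i\theta}$ with $\theta\in\{\pm\pi/4,\pm3\pi/4\}$, bound the polynomial factor by $t^{v_{\rm R}}e^{-v_{\rm I}\theta}$, observe that the numerator exponential has modulus $e^{-\pi t^2}$ on these rays, bound the denominator below by a constant times $e^{\pi t|\sin\theta|}$ for $t\ge r_{\rm M}$, and conclude absolute convergence. The only cosmetic difference is that the paper, after obtaining the same product of bounds, majorizes the Gaussian factor by a constant and uses the exponential growth of the denominator alone (via a Laplace-transform integral) to establish finiteness, whereas you keep the Gaussian as the dominant decay; either choice works.
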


\begin{proof}
Letting $v=-z, z-1$, the polynomial $I^{(\rm Po)}_{v}$ in the numerators of the integrands in (above) Eq. (\ref{eqn:Shrl}) denoted by
\begin{eqnarray}
I^{(\rm Po)}_v=w^{v} \hspace{2ex}\mbox{ (with } v=-z, z-1\mbox{)},
\end{eqnarray}
is rewritten (with $v_{\rm R}={\rm Re}(v), v_{\rm R}={\rm Im}(v)$) as
\begin{eqnarray}
\label{eqn:Poh}
\nonumber
I^{(\rm Po)}_{v}=w^{v_{\rm R}}w^{iv_{\rm I}}
=w^{v_{\rm R}}\exp\{\ln[w^{iv_{\rm I}}]\}
=w^{v_{\rm R}}\exp[iv_{\rm I}\ln(w)]
\end{eqnarray}
\begin{eqnarray}
\label{eqn:nwzl}
=w^{v_{\rm R}} \exp\{iv_{\rm I}[\ln(|w|)+i\arg(w)]\}
=w^{v_{\rm R}} \exp\{iv_{\rm I}\ln(|w|)-v_{\rm I}\arg(w)]\},
\end{eqnarray}
where $\arg(w)$ is argument (angle of $w$ measured counterclockwise from the real axis in the complex $w$-plane), which is restricted to the principal value between $-\pi$ and $+\pi$.
Letting  $\theta \in \mathbb{R}$ be the angle of $w$ (that is, $\theta=\arg(w)$) along the straight-line contour, then
\begin{eqnarray}
\label{eqn:Angt1}
\theta= \frac{3}{4}\pi \hspace{2ex}\mbox{for contour ${\rm \tilde{C}_{l1}}$},
\hspace{4ex}
\theta=\frac{-1}{4}\pi \hspace{2ex}\mbox{for contour ${\rm \tilde{C}_{l4}}$},
\end{eqnarray}
\begin{eqnarray}
\label{eqn:Angt2}
\theta= \frac{1}{4}\pi \hspace{2ex}\mbox{for contour ${\rm \tilde{C}_{r1}}$},
\hspace{4ex}
\theta=\frac{-3}{4}\pi \hspace{2ex}\mbox{for contour ${\rm \tilde{C}_{r4}}$}.
\end{eqnarray}
Using the above angle, the variable $w$ is expressed by
\begin{eqnarray}
\label{eqn:wthe}
w=|w|\exp(i\theta)
\hspace{2ex}\mbox{ with }\theta=\arg(w),
\end{eqnarray}
where $|w|$ is the radius (modulus) and $\theta$ is the angle (argument).
Then, from Eqs. (\ref{eqn:Poh})-(\ref{eqn:wthe}), we have
\begin{eqnarray}
\label{eqn:Ivn}
\nonumber
I^{(\rm Po)}_v
=|w|^{v_{\rm R}}\exp(iv_{\rm R}\theta)
\exp[iv_{\rm I}\ln(|w|)]\exp[-v_{\rm I}\arg(w)]
\end{eqnarray}
\begin{eqnarray}
=|w|^{v_{\rm R}}\exp(iv_{\rm R}\theta)
\exp[iv_{\rm I}\ln(|w|)]\exp(-v_{\rm I}\theta).
\end{eqnarray}
The absolute value of $I^{(\rm Po)}_v$ in (above) Eq. (\ref{eqn:Ivn}) is
\begin{eqnarray}
\label{eqn:PwAb}
|I^{(\rm Po)}_v|=|w|^{v_{\rm R}} \exp(-v_{\rm I}\theta).
\end{eqnarray}
\par
Moreover, let $I^{({\rm Nu})-}$ and $I^{({\rm Nu})+}$ be the parts of the numerators in the integrands in Eq. (\ref{eqn:Shrl}) written by
\begin{eqnarray}
I^{({\rm Nu})-}=\exp(-\pi i w^2),
\hspace{4ex}
I^{({\rm Nu})+}=\exp(+\pi i w^2).
\end{eqnarray}
Using Eqs. (\ref{eqn:Angt1})-(\ref{eqn:wthe}), it follows that
\begin{eqnarray}
I^{{(\rm Nu})-}
=\exp [-\pi i|w|^2 (\cos 2\theta +i\sin 2\theta) ]
\hspace{2ex}
\mbox{ with } \theta=\frac{3}{4}\pi,\frac{-1}{4}\pi,
\end{eqnarray}
\begin{eqnarray}
I^{{(\rm Nu})+}
=\exp [\pi i|w|^2 (\cos 2\theta +i\sin 2\theta) ]
\hspace{2ex}
\mbox{ with } \theta=\frac{1}{4}\pi,\frac{-3\pi}{4}\pi.
\end{eqnarray}
We then have
\begin{eqnarray}
\label{eqn:mMMin}
|I^{({\rm Nu})-}| =\exp (\pi |w|^2\sin 2\theta)
\hspace{2ex}
\mbox{ for }
\theta=\frac{3}{4}\pi,\frac{-1}{4}\pi,
\end{eqnarray}
\begin{eqnarray}
\label{eqn:mMPla}
|I^{({\rm Nu})+}| =\exp (-\pi |w|^2\sin 2\theta)
\hspace{2ex}
\mbox{ for }
\theta=\frac{1}{4}\pi,\frac{-3}{4}\pi.
\end{eqnarray}
Therefore, (above) Eqs. (\ref{eqn:mMMin})-(\ref{eqn:mMPla}) are reduced to
\begin{eqnarray}
\label{eqn:NuAB}
|I^{{\rm Nu})\mp}|
=\exp (-\pi |w|^2|\sin 2\theta| )
\hspace{2ex}
\mbox{ with }
\theta=\frac{\pm 1}{4}\pi,\frac{\pm 3}{4}\pi.
\end{eqnarray}
\par
In contrast, by using Eqs. (\ref{eqn:Angt1})-(\ref{eqn:wthe}) for the following denominator $I^{\rm (De)}$ in  Eq. (\ref{eqn:Shrl})
\begin{eqnarray}
\label{eqn:DeLM}
I^{\rm (De)}=\exp(\pi iw)-\exp(-\pi iw),
\end{eqnarray}
we get
\begin{eqnarray}
\label{eqn:DeA1}
\nonumber
I^{\rm (De)}=
 \exp [ \pi i|w|(\cos \theta +i\sin \theta )]
-\exp [-\pi i|w|(\cos \theta +i\sin \theta )]
\end{eqnarray}
\begin{eqnarray}
=\exp ( \pi i|w|\cos \theta)
 \exp (-\pi  |w|\sin \theta)
-\exp (-\pi i|w|\cos \theta)
 \exp ( \pi  |w|\sin \theta).
\end{eqnarray}
By the definition of $r_{\rm M}$ (in Eq. (\ref{eqn:ConRM}) for Definition \ref{dfi:intr}), the denominator $I^{({\rm De})}$ in (above) Eq. (\ref{eqn:DeA1}) for large $|w|$ is approximated by
\begin{eqnarray}
\label{eqn:DeA21}
I^{({\rm De})} \approx
-\exp (-\pi i|w|\cos \theta)
 \exp ( \pi  |w|\sin \theta)
\hspace{2ex}
\mbox{ for large }|w| \mbox{ with } (|w| \geq r_{\rm M})
\mbox{ and } \sin \theta >0
\hspace{2ex}
(\theta=\frac{3}{4}\pi,\frac{1}{4}\pi),
\end{eqnarray}
whereas
\begin{eqnarray}
\label{eqn:DeA22}
I^{\rm (De)} \approx
\exp ( \pi i|w|\cos \theta)
\exp (-\pi  |w|\sin \theta)
\hspace{2ex}
\mbox{ for large }|w| \mbox{ with } (|w| \geq r_{\rm M})
\mbox{ and } \sin \theta <0
\hspace{2ex}
(\theta=\frac{-1}{4}\pi,\frac{-3}{4}\pi).
\end{eqnarray}
Then, (above) Eqs. (\ref{eqn:DeA21})-(\ref{eqn:DeA22}) are reduced to
\begin{eqnarray}
\label{eqn:DeA3}
|I^{\rm (De)}| \approx
\exp (\pi  |w||\sin \theta|)
\hspace{2ex}
\mbox{ for large }|w| \mbox{ with } (|w| \geq r_{\rm M})
\mbox{ and }
\theta=\frac{\pm 1}{4}\pi,\frac{\pm 3}{4}\pi.
\end{eqnarray}
\par
Accordingly, combining Eqs. (\ref{eqn:Ivn}), (\ref{eqn:NuAB}) and (\ref{eqn:DeA3}), the absolute value of the integrands in Eq. (\ref{eqn:Shrl}) is
\begin{eqnarray}
\label{eqn:Ivh}
I_{\rm h}:=
\frac{|I^{(\rm P)}|
|I^{(\rm {Nu})\pm}|}{|I^{({\rm De})}|}
= |w|^{v_{\rm R}} \exp(-v_{\rm I}\theta)
  \exp (-\pi |w|^2|\sin 2\theta|)
  \exp (-\pi |w||\sin\theta|).
\end{eqnarray}
Then, 
(above) Eq. (\ref{eqn:Ivh}), for large  $|w|$, is approximated by
\begin{eqnarray}
\label{eqn:DIv}
\nonumber
I_{\rm h}
\leq 
|w|^{v_{\rm R}}
\exp(-v_{\rm I}\theta)
\exp (-\pi r_{\rm M}^2|\sin 2\theta|)
  \exp (-\pi |w||\sin\theta|)
\end{eqnarray}
\begin{eqnarray}
\nonumber
\leq
|w|^{v_{\rm R}}
\exp(-v_{\rm I}\theta)
\exp (-\pi |w||\sin \theta |)
\end{eqnarray}
\begin{eqnarray}
\approx
|w|^{v_{\rm R}}
\exp (-\pi |w||\sin \theta |)
\hspace{2ex}\mbox{ for large } |w| \geq r_{\rm M},
\end{eqnarray}
where, in the (above) last equation, the constant $\exp(-v_{\rm I}\theta)$ were disregarded.
Using Eq. (\ref{eqn:wthe}) for the straight-line contour, we have
\begin{eqnarray}
\label{eqn:thet1}
dw=d|w|\exp(i\theta)
\hspace{4ex}\mbox{ with } |\exp(i\theta)|=1.
\end{eqnarray}
Additionally, we denote the sign factor $\sigma \in \mathbb{N}$ due to the direction of integration by
\begin{eqnarray}
\label{eqn:sigm1}
\sigma:=
\left\{\begin{array}{ll}
-1 \mbox{ with } |\sigma|=1 &
\mbox{for contours such as }
({\rm \tilde{C}_{l1}}, {\rm \tilde{C}_{r1}})
\mbox{ oriented to the (coordinate) origin}\\
\mbox{ } & \mbox{ } \\
+1 &
\mbox{for contours such as }
({\rm \tilde{C}_{l7}}, {\rm \tilde{C}_{r7}})
\mbox{ oriented in the} \exp( \frac{-1}{4}\pi i)\infty, \exp( \frac{ -3}{4}\pi i)\infty
\mbox{ direction}
\end{array}\right. \hspace{-1ex}.
\end{eqnarray}
\par
Using Eqs. (\ref{eqn:Ivh})-(\ref{eqn:sigm1}) (taking into account that $-1<v_{\rm R}={\rm Re}(v)=-z_{\rm R}, z_{\rm R}-1<0$ for $v=-z, z-1$), the integrals of $I_{\rm h}$ (in Eq. \ref{eqn:DIv}) over the region $|w| \geq r_{\rm M}$ lead to
\begin{eqnarray}
\label{eqn:SCh}
\nonumber
|I^{\rm S}_{\tilde{\rm C}_{\rm lh}}|
\mbox{ and }
|I^{\rm S}_{\tilde{\rm C}_{\rm rh}}|
\leq
|\sigma| | \int_{r_{\rm M}}^{\infty} dw
I_{\rm h} |
\leq
|\sigma| | \exp(i\theta)|  \int_{r_{\rm M}}^{\infty} d|w|
I_{\rm h}
\end{eqnarray}
\begin{eqnarray}
\label{eqn:IntLp0}
\nonumber
\leq 
\int_{r_{\rm M}}^{\infty} d|w|
[ r_{\rm M}^{v_{\rm R}}
\exp (-\pi |w| |\sin \theta |) ]
<
r_{\rm M}^{v_{\rm R}}
\int_{0}^{\infty} d|w|
[\exp (-\pi |w| |\sin \theta| ) ]
\end{eqnarray}
\begin{eqnarray}
=
r_{\rm M}^{v_{\rm R}}
\frac{1}{(\pi |\sin \theta|)}
\hspace{4ex}
\mbox{ with }
v_{\rm R}=-z_{\rm R}, z_{\rm R}-1
\mbox{ and } 
\theta=\frac{\pm 1}{4}\pi,\frac{\pm 3}{4}\pi.
\end{eqnarray}
In the last integral, we
used the Laplace transform \cite{Moriguchi et al. 195619571960}.
Thus, the integral in (above) Eq. (\ref{eqn:IntLp0}) is finite. Namely, using Eqs. (\ref{eqn:Shrl}), (\ref{eqn:Angt1})-(\ref{eqn:Angt2}) and (\ref{eqn:IntLp0}), we derive
\begin{eqnarray}
\label{eqn:SMl}
I^{\rm S}_{\tilde{\rm C}_{\rm lh}}
=\mbox{ finite value (integral along either of contours }{\rm \tilde{C}_{l1}},{\rm \tilde{C}_{l4}}),
\end{eqnarray}
\begin{eqnarray}
\label{eqn:SMr}
I^{\rm S}_{\tilde{\rm C}_{\rm rh}}
=\mbox{ finite value (integral along either of contours }{\rm \tilde{C}_{r1}},{\rm \tilde{C}_{r4}}).
\end{eqnarray}
This implies that the above integrals are independent of the arc radius $r_1$ (in Definitions \ref{dfi:intr}, \ref{dfi:intw}) and negligible compared with those with singularities (in Lemmas \ref{lem:Intmr}, \ref{lem:singc}) around the (coordinate) origin in the limit $r_{1} \rightarrow \infty$.
\end{proof}

We now prove a lemma which shows that when the contours (in Figs. \ref{fig:fig1}, \ref{fig:fig2}) are in the region with intermediate distance to the origin, the integrals in the completed zeta function are finite as well.
\par
\begin{lemm}
\label{lem:IntMm}
Similarly with Lemma \ref{lem:IntLM}, 
let $w,v, z\in \mathbb{C}$ (with $v=-z, z-1$), and let $z_{\rm R}={\rm Re}(z)$ with $0<z_{\rm R}<1$.
Let $r_{\rm m}$ and $r_{\rm M} \in \mathbb{R}$ (with $r_{\rm m} < r_{\rm M}$) be the small and large radii (bounds of contours as in Definitions \ref{dfi:intr}, \ref{dfi:intw}) of $w$ along the (shifted straight-line) contours ${\rm \tilde{C}_{lm}}$ and ${\rm \tilde{C}_{rm}}$, where ${\rm \tilde{C}_{lm}}$ is either of the contours denoted by ${\rm \tilde{C}_{l2}} and {\rm \tilde{C}_{l3}}$ (in Fig. \ref{fig:fig1}), while ${\rm \tilde{C}_{rm}}$ is either of the contours denoted by ${\rm \tilde{C}_{r2}} and {\rm \tilde{C}_{r3}}$ (in Fig. \ref{fig:fig2}).
Then, the following integrals of the integrands in Eqs. (\ref{eqn:zetl})-(\ref{eqn:zet0})
\begin{eqnarray}
\label{eqn:SMmm}
\nonumber
I^{\rm S}_{\tilde{\rm C}_{\rm lm}}
    =\int_{\tilde{\rm C}_{\rm lm}} dw
\frac{w^{-z}\exp(-\pi i w^2)}{\exp(\pi iw)-\exp(-\pi iw)},
\hspace{4ex}
I^{\rm S}_{\tilde{\rm C}_{\rm rm}}
    =\int_{\tilde{\rm C}_{\rm rm}} dw
\frac{w^{z-1}\exp(+\pi i w^2)}{\exp(\pi iw)-\exp(-\pi iw)}
\end{eqnarray}
\begin{eqnarray}
\hspace{20ex}
\mbox{ along the
contours }
\tilde{\rm C}_{\rm lm}=\tilde{\rm C}_{\rm l2}, \tilde{\rm C}_{\rm l3}
\mbox{ (in Fig. \ref{fig:fig1}) and }
\tilde{\rm C}_{\rm rm}=\tilde{\rm C}_{\rm r2}, \tilde{\rm C}_{\rm r3}
\mbox{ (in Fig. \ref{fig:fig2})},
\end{eqnarray}
are finite (negligible compared with those with singularities around the origin of coordinates).
\end{lemm}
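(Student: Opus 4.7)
The plan is to adapt the argument of Lemma \ref{lem:IntLM} to the compact intermediate range $r_{\rm m} \leq |w| \leq r_{\rm M}$, replacing the exponential-decay bound on the denominator (which required $|w| \geq r_{\rm M}$) with a straightforward compactness argument. As before, I would parametrize each of the contours $\tilde{\rm C}_{\rm l2}, \tilde{\rm C}_{\rm l3}, \tilde{\rm C}_{\rm r2}, \tilde{\rm C}_{\rm r3}$ by $w = |w|\exp(i\theta)$ with $\theta \in \{\frac{\pm 1}{4}\pi, \frac{\pm 3}{4}\pi\}$ as in (\ref{eqn:Angt1})--(\ref{eqn:Angt2}), so that $dw = d|w|\exp(i\theta)$ and the integrals in (\ref{eqn:SMmm}) reduce to real integrals of $|w|$ over the compact interval $[r_{\rm m}, r_{\rm M}]$.

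For the polynomial factor $I^{(\rm Po)}_v = w^v$ and the Gaussian factors $I^{({\rm Nu})\mp} = \exp(\mp \pi i w^2)$ appearing in the numerators, the identities (\ref{eqn:PwAb}) and (\ref{eqn:NuAB}) derived in the proof of Lemma \ref{lem:IntLM} are purely algebraic consequences of the parametrization and transfer verbatim, giving $|I^{(\rm Po)}_v| = |w|^{v_{\rm R}}\exp(-v_{\rm I}\theta)$ and $|I^{({\rm Nu})\mp}| = \exp(-\pi |w|^2 |\sin 2\theta|) \leq 1$. Both are continuous functions of $|w|$ on $[r_{\rm m}, r_{\rm M}]$, so each is bounded above by some finite constant on this compact interval.

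For the denominator $I^{(\rm De)} = \exp(\pi i w) - \exp(-\pi i w) = 2i\sin(\pi w)$, the asymptotic simplification (\ref{eqn:DeA3}) is not available in the intermediate range. Instead I would observe that $\sin(\pi w)$ vanishes precisely at integer $w$, and each of the four diagonal rays at $\arg w = \frac{\pm 1}{4}\pi, \frac{\pm 3}{4}\pi$ meets the integers only at $w = 0$ (since $\sin\theta \neq 0$ on each ray, a real integer lattice point on the ray would force $|w|=0$). Because $r_{\rm m} > 0$ by Definition \ref{dfi:intr}, the origin is excluded from the segment, so $|I^{(\rm De)}|$ is continuous and strictly positive on each compact contour and attains a positive minimum $M_{\rm De}>0$ there. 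Combining this lower bound with the upper bounds on the numerator factors, the integrand of (\ref{eqn:SMmm}) is bounded on a contour of finite arc length $r_{\rm M} - r_{\rm m}$, yielding the desired finiteness of $I^{\rm S}_{\tilde{\rm C}_{\rm lm}}$ and $I^{\rm S}_{\tilde{\rm C}_{\rm rm}}$.

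The main (essentially only) obstacle is the bookkeeping verification that the four straight-line segments avoid all zeros of $\sin(\pi w)$, which reduces to the geometric observation that the $45^\circ$ diagonals through the origin pass through no nonzero integer, together with $r_{\rm m} > 0$ excluding the origin itself. Once this is noted, the result follows from continuity on a compact set rather than any delicate estimate, and is noticeably lighter than the exponential-damping analysis required for the unbounded tails treated in Lemma \ref{lem:IntLM}.
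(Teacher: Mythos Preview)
Your argument is correct, and it reaches the same conclusion as the paper, but the treatment of the denominator differs. The paper does not invoke compactness; instead it factors
\[
I^{(\rm De)}=\exp(\pi i w)\bigl[1-\exp(-2\pi i w)\bigr]=:I^{(\rm De)a}\,I^{(\rm De)b},
\]
computes $|I^{(\rm De)a}|=\exp(-\pi|w|\sin\theta)$, and then bounds $|I^{(\rm De)b}|$ below by the explicit positive quantity $|\exp(2\pi|w|\sin\theta)-1|\geq |\exp(2\pi r_{\rm m}\sin\theta)-1|$ via the elementary ``distance from a circle of radius $\rho$ to the point $1$ is at least $|\rho-1|$'' inequality, treating the cases $\sin\theta>0$ and $\sin\theta<0$ separately. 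This produces a concrete upper bound for the whole integral, namely $(r_{\rm M}-r_{\rm m})\,r_{\rm m}^{v_{\rm R}}\exp(-v_{\rm I}\theta)\exp(-\pi r_{\rm m}^2|\sin 2\theta|)\big/|\exp(2\pi r_{\rm m}\sin\theta)-1|$.

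Your route replaces this explicit estimate by the soft observation that $\sin(\pi w)$ has no zeros on the four diagonal segments (since their only integer point is the origin, which $r_{\rm m}>0$ excludes), and then appeals to continuity on a compact set. This is shorter and conceptually cleaner; what the paper's version buys in exchange is an explicit constant showing exactly how the bound depends on $r_{\rm m}$, which fits its later narrative that the finiteness is ``due to the large value of the radius $r_{\rm m}$'' and is independent of the inner radius $r_1$. Either argument suffices for the lemma as stated.
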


\begin{proof}
The denominator $I^{({\rm De})}$ of the integrands in Eq. (\ref{eqn:SMmm}) is rewritten as
\begin{eqnarray}
\label{eqn:DENen}
I^{({\rm De})}=\exp(\pi iw)-\exp(-\pi iw)
=\exp(\pi iw)[1-\exp(-2\pi iw)].
\end{eqnarray}
We further denote the parts of the above denominator (in Eq. (\ref{eqn:DENen})) by
\begin{eqnarray}
\label{eqn:InpD1}
I^{({\rm De})a}=\exp(\pi iw),
\end{eqnarray}
\begin{eqnarray}
\label{eqn:InpD2}
I^{({\rm De})b}=1-\exp(-2\pi iw).
\end{eqnarray}
Here, let $\theta \in \mathbb{R}$ be the angle (argument measured counterclockwise from the real axis in the complex $w$-plane), then
\begin{eqnarray}
\label{eqn:Angm1}
\theta= \frac{3}{4}\pi \hspace{2ex}\mbox{for contour ${\rm \tilde{C}_{l2}}$},
\hspace{4ex}
\theta=\frac{-1}{4}\pi \hspace{2ex}\mbox{for contour ${\rm \tilde{C}_{l3}}$},
\end{eqnarray}
\begin{eqnarray}
\label{eqn:Angm2}
\theta= \frac{1}{4}\pi \hspace{2ex}\mbox{for contour ${\rm \tilde{C}_{r2}}$},
\hspace{4ex}
\theta=\frac{-3}{4}\pi \hspace{2ex}\mbox{for contour ${\rm \tilde{C}_{r3}}$}.
\end{eqnarray}
Using $w=|w|(\cos\theta+i\sin\theta)$ (in Eq. (\ref{eqn:wthe})) and Eq. (\ref{eqn:InpD1}), it follows that
\begin{eqnarray}
\nonumber
I^{({\rm De})a}=\exp[\pi i|w|(\cos\theta+i\sin\theta)]
\end{eqnarray}
\begin{eqnarray}
=\exp (\pi i|w|\cos\theta) \exp(-\pi|w|\sin\theta),
\end{eqnarray}
yielding
\begin{eqnarray}
|I^{({\rm De})a}|=\exp(-\pi|w|\sin\theta).
\end{eqnarray}
\par
Meanwhile, from Eq. (\ref{eqn:InpD2}) (with Eq. (\ref{eqn:wthe})), we derive
\begin{eqnarray}
\label{eqn:DENb2}
I^{({\rm De})b}=1-\exp (-2\pi i|w|\cos\theta)\exp(2\pi|w|\sin\theta).
\end{eqnarray}
For $\sin\theta>0$ and $r_{\rm m} \leq |w| \leq r_{\rm M}$ ($r_{\rm m}$ and $r_{\rm M}$ are the radii defined in Definitions \ref{dfi:intr}, \ref{dfi:intw} with Figs. \ref{fig:fig1}, \ref{fig:fig2} for the contours in Eq. (\ref{eqn:SMmm})), the following quantity in the second term on the right-hand side of above Eq. (\ref{eqn:DENb2}) is larger than unity (one), that is,
\begin{eqnarray}
\label{eqn:DENb22}
\exp(2\pi|w|\sin\theta) \geq \exp(2\pi r_{\rm m}\sin\theta)>1
\hspace{2ex}
\mbox{ for } \sin\theta>0. 
\end{eqnarray}
Furthermore, the second term on the right-hand side of Eq. (\ref{eqn:DENb2}) is a complex number with radius (modulus) 
denoted as
$\exp(2\pi|w|\sin\theta)$ 
and angle (argument) $-2\pi |w|\cos\theta$, whose distance to the point 1=(1,0) is equal to $|I^{({\rm De})b}|$. This distance $|I^{({\rm De})b}|$ is larger than the difference between the above radius $\exp(2\pi|w|\sin\theta)$ and the radius of the unit circle (centered at the orogin of coordinates), namely,
\begin{eqnarray}
\label{eqn:Demin0}
|I^{({\rm De})b}|=|1-\exp (-2\pi i|w|\cos\theta)\exp(2\pi|w|\sin\theta)|
 \geq \exp(2\pi|w|\sin\theta)-1>0
\hspace{2ex}
\mbox{ for } \sin\theta>0. 
\end{eqnarray}
Combining Eqs. (\ref{eqn:DENb22}) and (\ref{eqn:Demin0}), we have (taking into account that $r_{\rm m} \leq |w| \leq r_{\rm M}$)
\begin{eqnarray}
\label{eqn:Demin}
|I^{({\rm De})b}| \geq \exp(2\pi|w|\sin\theta)-1
\geq \exp(2\pi r_{\rm m}\sin\theta)-1> 0
\hspace{2ex}\mbox{ for } \sin\theta > 0.
\end{eqnarray}
Similarly, for $\sin\theta<0$ and $r_{\rm m} \leq |w| \leq r_{\rm M}$, we obtain the following relation, corresponding to Eq. (\ref{eqn:DENb22}),
\begin{eqnarray}
\label{eqn:DENb3}
1>\exp(2\pi|w|\sin\theta) \geq \exp(2\pi r_{\rm m}\sin\theta)>0
\hspace{2ex}
\mbox{ for } \sin\theta<0. 
\end{eqnarray}
The distance $|I^{({\rm De})b}|$ between the second term on the right in Eq. (\ref{eqn:DENb2}) and the point 1=(1,0) in this case is larger than the difference between the aforementioned radius (modulus) 
$\exp(2\pi|w|\sin\theta)$ 
and the radius of the unit circle (centered at the origin of coordinates). We then have (considering $r_{\rm m} \leq |w| \leq r_{\rm M}$) that
\begin{eqnarray}
\label{eqn:Demot}
|I^{({\rm De})b}| \geq 1-\exp(2\pi|w|\sin\theta)
\geq 1-\exp(2\pi r_{\rm m}\sin\theta)> 0
\hspace{2ex}\mbox{ for } \sin\theta <0.
\end{eqnarray}
\par
In contrast, using the notation $v=-z, z-1$, the parts of the numerators of the integrands in Eq. (\ref{eqn:SMmm}) can be written as
\begin{eqnarray}
\label{eqn:Pom}
I^{(\rm Po)}_v=w^v
\mbox{ with } v=-z, z-1,
\end{eqnarray}
\begin{eqnarray}
\label{eqn:Num}
I^{({\rm Nu})\mp}=\exp(\mp\pi i w^2).
\end{eqnarray}
By denoting $w=|w|(\cos\theta+i\sin\theta)$, we obtain the same results as those in Eqs. (\ref{eqn:PwAb}) and (\ref{eqn:NuAB}) (in Lemma \ref{lem:IntMm}). Namely,
\begin{eqnarray}
\label{eqn:PoABm}
|I^{(\rm Po)}_v|=|w|^{v_{\rm R}} \exp(-v_{\rm I}\theta),
\end{eqnarray}
\begin{eqnarray}
\label{eqn:NuABm}
|I^{{\rm Nu})\mp}|
=\exp (-\pi |w|^2|\sin 2\theta| )
\hspace{2ex}
\mbox{ with }
\theta=\frac{\pm 1}{4}\pi,\frac{\pm 3}{4}\pi.
\end{eqnarray}
Therefore, combining
Eq. (\ref{eqn:Demin}) (or Eq. (\ref{eqn:Demot})) and Eqs. (\ref{eqn:PoABm})-(\ref{eqn:NuABm}), we obtain that the absolute value of the integrands is
\begin{eqnarray}
\label{eqn:Ivm}
I_{\rm m}:=
\frac{|I^{(\rm P)}||I^{(\rm {Nu})\mp}|}{|I^{({\rm De})}|}.
\end{eqnarray}
\par
Using Eqs. (\ref{eqn:Demin}) (or Eq. (\ref{eqn:Demot})) and Eqs. (\ref{eqn:PoABm})-(\ref{eqn:Ivm}) (with consideration that $-1<v_{\rm R}={\rm Re}(v)=-z_{\rm R}, z_{\rm R}-1<0$ for $v=-z, z-1$, as well as $|\sigma \exp(i\theta)|=1 $ in Eqs. (\ref{eqn:thet1})-(\ref{eqn:SCh})), we obtain
\begin{eqnarray}
\label{eqn:Intm1}
\nonumber
|I^{\rm S}_{\tilde{\rm C}_{\rm lm}}|
\mbox{ and }
|I^{\rm S}_{\tilde{\rm C}_{\rm rm}}|
\leq
|\int_{r_{\rm m}}^{r_{\rm M}}  d|w| I_{\rm m}|
\end{eqnarray}
\begin{eqnarray}
\leq
(r_{\rm M}-r_{\rm m})
\frac{
r_{\rm m}^{v_{\rm R}} \exp(-v_{\rm I}\theta)
\exp (-\pi r_{\rm m}^2|\sin 2\theta|)
}
{|\exp(2\pi r_{\rm m}\sin\theta)-1|}
\hspace{2ex}
\mbox{ with } \theta=\frac{\pm 1}{4}\pi, \theta=\frac{\pm 3}{4}\pi.
\end{eqnarray}
Hence, the above straight-line integrals (in Eq. (\ref{eqn:Intm1}) of $I_{\rm m}$ (in Eq. (\ref{eqn:Ivm})) with respect to $|w|$ in the region $r_{\rm m} \leq |w| \leq w_{\rm M}$ are smaller than the terms proportional to $r_{\rm m}^{v_{\rm R}}$ (disregarding the multiplied constants) with $-1<v_{\rm R}<0$, and take finite values. This finiteness is due to the large value of the radius $r_{\rm m}$, which is independent of the radius $r_{1}$ of the arc contours (in Figs. \ref{fig:fig1}, \ref{fig:fig2}) around the (coordinate) origin with $r_{\rm m} >> r_{1}$ (as in Definitions \ref{dfi:intr}, \ref{dfi:intw}). Therefore, singularities do not occur here unlike the case of integrals (in Lemmas \ref{lem:Intmr}, \ref{lem:singc}) around the (coordinate) origin in the limit of $r_1 \rightarrow 0$.
Namely,
\begin{eqnarray}
\label{eqn:Sml}
I^{\rm S}_{\tilde{\rm C}_{\rm lm}}
=\mbox{ finite value (integral along either of contours }{\rm \tilde{C}_{l2}},{\rm \tilde{C}_{l3}}\mbox{ in Fig. \ref{fig:fig1})},
\end{eqnarray}
\begin{eqnarray}
\label{eqn:Smr}
I^{\rm S}_{\tilde{\rm C}_{\rm rm}}
=\mbox{ finite value (integral along either of contours }{\rm \tilde{C}_{r2}},{\rm \tilde{C}_{r3}} \mbox{ in Fig. \ref{fig:fig2})}.
\end{eqnarray}
This implies that the above integrals are negligible compared with those with singularities around the (coordinate) origin.
\end{proof}
\par
In this section, we deformed and shifted the contours denoted by $0 \searrow 1$ and $0 \swarrow 1$ in the integral form of the (completed) zeta function given by Eqs. (\ref{eqn:zetl})-(\ref{eqn:zet0}) to those shown in Fig. \ref{fig:fig1} and Fig. \ref{fig:fig2}, respectively (in Theorem \ref{lem:Defm}). Then, we separated the finite integrals (in Lemmas \ref{lem:IntLM}, \ref{lem:IntMm}) along the shifted straight-line contours from the integral around the (coordinate) origin containing the singularities, which appear only when the contours approach the origin.

\section{
Proof of the 
Riemann's
conjecture that
the real part of
all non-trivial
zeros of the zeta function
is 1/2
}
\label{sec:3}

In previous Section \ref{sec:2} (with Lemmas \ref{lem:IntLM}, \ref{lem:IntMm}), it was shown that integrals of the integrands in $\zeta_{\rm l}$ and $\zeta_{\rm r}$ (in Eqs. (\ref{eqn:zetl})-(\ref{eqn:zetr})) for the integral form of the completed zeta function in Eq. (\ref{eqn:zet0})), along the shifted straight-line contours (in Figs. \ref{fig:fig1}, \ref{fig:fig2}), which are away from the (coordinate) origin (in the complex plane), are always finite and do not have  singularities.
(Note: Using Theorem \ref{thm:czta} and Eq. (\ref{eqn:zets}), the complex numbers $z$ and $1-z$ for $\zeta_{\rm l}$ and $\zeta_{\rm r}$ in the completed zeta function in Eq. (\ref{eqn:zeta0}) were exchanged as shown in Eqs. (\ref{eqn:zetl})-(\ref{eqn:zet0})).
In this section, we show that the integrals along the contours near the (coordinate) origin (in Figs. \ref{fig:fig1}, \ref{fig:fig2}) have singularities (in Lemmas \ref{lem:Intmr}, \ref{lem:singc}) when the radius of the arc contours approaches zero.
Then,
in Theorem \ref{thm:ReH}, we prove that
the real part of
all non-trivial
zeros of the zeta function
must be 1/2.
As it is known that
all
non-trivial zeros of the zeta function exist in the region $0< {\rm Re } (z) <1$ in literature \cite{Poussin 1896,Hadamard 1896}, we concentrate on this region. Furthermore, it is also known that the number of zeros (of the zeta function) with a real part of 1/2 is infinite \cite{Hardy 1914}.
To derive the real part of non-trivial zeros of the zeta function, the present approach uses (in addition to the above symmetry given by Eq. (\ref{eqn:zets})) the property (with merits) that a quantity in one term in a highly (attainable) symmetrized integral form generates a corresponding (paired) quantity in another term.
\par
Similarly with Lemmas \ref{lem:IntLM}, \ref{lem:IntMm}, we here evaluate the integrals of the form in Eqs. (\ref{eqn:zetl})-(\ref{eqn:zet0}), and separate singularities.

\begin{lemm}
\label{lem:Intmr}
Let $w,v, z\in \mathbb{C}$ (with $v=-z, z-1$), and let $z_{\rm R}={\rm Re}(z)$ with $0<z_{\rm R}<1$.
Let $r_1$ and $r_{\rm m} \in \mathbb{R}$ with $r_1 << r_{\rm m}$ be the small radii (bounds of contours as in Definitions \ref{dfi:intr}, \ref{dfi:intw}) of $w$ along the (shifted straight-line) contours ${\rm \tilde{C}_{ls}}$ and ${\rm \tilde{C}_{rs}}$, where ${\rm \tilde{C}_{ls}}$ is either of the contours denoted by ${\rm \tilde{C}_{lp}},{\rm \tilde{C}_{ln}}$ (in Fig. \ref{fig:fig1}), while ${\rm \tilde{C}_{rs}}$ is  either of the contours denoted by ${\rm \tilde{C}_{rp}}, {\rm \tilde{C}_{rn}}$ (in Fig. \ref{fig:fig2}).
Then, the following integrals of the integrands in Eqs. (\ref{eqn:zetl})-(\ref{eqn:zet0})
\begin{eqnarray}
\label{eqn:Smrs}
\nonumber
I^{\rm S}_{\tilde{\rm C}_{\rm ls}}
    =\int_{\tilde{\rm C}_{\rm ls}} dw
\frac{w^{-z}\exp(-\pi i w^2)}{\exp(\pi iw)-\exp(-\pi iw)},
\hspace{4ex}
I^{\rm S}_{\tilde{\rm C}_{\rm rs}}
    =\int_{\tilde{\rm C}_{\rm rs}} dw
\frac{w^{z-1}\exp(+\pi i w^2)}{\exp(\pi iw)-\exp(-\pi iw)}
\end{eqnarray}
\begin{eqnarray}
\hspace{20ex}
\mbox{ along the
contours }
\tilde{\rm C}_{\rm ls}=\tilde{\rm C}_{\rm lp}, \tilde{\rm C}_{\rm ln}
\mbox{ (in Fig. \ref{fig:fig1}) and }
\tilde{\rm C}_{\rm rs}=\tilde{\rm C}_{\rm rp}, \tilde{\rm C}_{\rm rn}
\mbox{ (in Fig. \ref{fig:fig2})},
\end{eqnarray}
have singularities in the limit of $r_1 \rightarrow 0$. The power of singularities of these integrals is $r_1^{-z_{\rm R}}$ on the left in above Eq. (\ref{eqn:Smrs}) (for the contours $\tilde{\rm C}_{\rm ls}$ $=\tilde{\rm C}_{\rm lp}, \tilde{\rm C}_{\rm ln}$), while the corresponding power is $r_1^{z_{\rm R}-1}$  on the right in above Eq. (\ref{eqn:Smrs}) (for the contours $\tilde{\rm C}_{\rm rs} =\tilde{\rm C}_{\rm rp}, \tilde{\rm C}_{\rm rn}$).
\end{lemm}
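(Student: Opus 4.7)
The plan is to exploit the small-$w$ approximations built into Definition \ref{dfi:intr}: for $|w| \leq r_{\rm m}$ the denominator $\exp(\pi iw) - \exp(-\pi iw)$ reduces to $2\pi i w$, and the factors $\exp(\mp\pi iw^{2})$ reduce to $1$. Under this replacement the integrands in Eq. (\ref{eqn:Smrs}) become, to leading order, $w^{-z-1}/(2\pi i)$ on the left and $w^{z-2}/(2\pi i)$ on the right, so the behavior of the integrals as $r_{1}\to 0$ is dictated by an elementary power-law radial integral.

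Concretely, I would parameterize each of the four straight-line contours by $w=|w|\exp(i\theta)$ with the fixed angle $\theta$ prescribed in Definition \ref{dfi:Conlr} (namely $\theta=3\pi/4$ on ${\rm\tilde{C}_{lp}}$, $\theta=-\pi/4$ on ${\rm\tilde{C}_{ln}}$, $\theta=\pi/4$ on ${\rm\tilde{C}_{rp}}$, $\theta=-3\pi/4$ on ${\rm\tilde{C}_{rn}}$), using $dw=\exp(i\theta)\,d|w|$ together with the orientation sign $\sigma$ from Eq. (\ref{eqn:sigm1}). The left-hand integral then reduces to an $r_{1}$-independent prefactor of modulus $1/(2\pi)$ times $\int_{r_{1}}^{r_{\rm m}}|w|^{-z-1}\,d|w|=(r_{1}^{-z}-r_{\rm m}^{-z})/z$, and the right-hand integral analogously to a prefactor times $\int_{r_{1}}^{r_{\rm m}}|w|^{z-2}\,d|w|=(r_{\rm m}^{z-1}-r_{1}^{z-1})/(z-1)$. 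Because $0<z_{\rm R}<1$, the lower-limit contributions $r_{1}^{-z}$ and $r_{1}^{z-1}$ dominate as $r_{1}\to 0$, while the upper-limit contributions are bounded (since $r_{\rm m}$ is fixed away from zero). Taking moduli via $|r_{1}^{-z}|=r_{1}^{-z_{\rm R}}$ and $|r_{1}^{z-1}|=r_{1}^{z_{\rm R}-1}$ then delivers the singular orders claimed by the lemma.

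The step that I expect to require most care is the justification that replacing the exact integrands by their leading small-$w$ approximations does not distort the dominant singularity. I would write each exact integrand as (the leading term above) $\times(1+R(w))$, where $R(w)=O(|w|^{2})$ collects the Taylor remainders of $\exp(\mp\pi iw^{2})-1$ and of $2\pi iw/[\exp(\pi iw)-\exp(-\pi iw)]-1$. The resulting correction integrals are bounded in modulus by constants times $\int_{r_{1}}^{r_{\rm m}}|w|^{1-z_{\rm R}}\,d|w|$ and $\int_{r_{1}}^{r_{\rm m}}|w|^{z_{\rm R}}\,d|w|$, respectively, both of which remain bounded (in fact vanish with $r_{1}$) as $r_{1}\to 0$ and are therefore strictly subleading compared with $r_{1}^{-z_{\rm R}}$ and $r_{1}^{z_{\rm R}-1}$. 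This will confirm that the orders $r_{1}^{-z_{\rm R}}$ and $r_{1}^{z_{\rm R}-1}$ stated in the lemma are the true dominant orders of the singularities in the limit $r_{1}\to 0$.
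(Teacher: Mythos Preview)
Your proposal is correct and follows essentially the same route as the paper: both replace the integrand by its leading small-$w$ form $w^{v-1}/(2\pi i)$ via the approximations of Definition~\ref{dfi:intr}, parameterize the straight segments by $w=|w|e^{i\theta}$ with the fixed contour angles, and evaluate the resulting radial power integral to extract the $r_{1}^{-z_{\rm R}}$ and $r_{1}^{z_{\rm R}-1}$ behaviors. The only cosmetic differences are that the paper introduces the substitution $\tilde t=\ln|w|$ before integrating (whereas you antidifferentiate the power directly), and that the paper simply ``disregards'' the multiplicative constants and the approximation error, while you package the latter into an explicit $O(|w|^{2})$ remainder bound; your added remainder step is a welcome tightening but does not change the underlying argument.
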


\begin{proof}
For $0< r_1 < |w| < r_{\rm m}$ and $r_1 << r_{\rm m}$ with $r_1$ and $r_{\rm m}$ being the small radii (bounds of contours as in Definitions \ref{dfi:intr}, \ref{dfi:intw}) along the straight-line contours ${\rm \tilde{C}_{l2}}, {\rm \tilde{C}_{l3}}$, ${\rm \tilde{C}_{r2}}, {\rm \tilde{C}_{r3}}$ (in Figs. \ref{fig:fig1}, \ref{fig:fig2}),
the denominator $I^{({\rm De})}$ and parts of the numerators, $I^{({\rm Nu})-}$ and $I^{({\rm Nu})+}$ in Eq. (\ref{eqn:Smrs}) are approximated by
\begin{eqnarray}
\label{eqn:Desa}
I^{({\rm De})}=\exp(\pi iw)-\exp(-\pi iw)
\approx 2\pi i w,
\end{eqnarray}
\begin{eqnarray}
\label{eqn:NusNP}
I^{({\rm Nu})-}=\exp(-\pi i w^2) \approx 1,
\hspace{4ex}
I^{({\rm Nu})+}=\exp(+\pi i w^2) \approx 1.
\end{eqnarray}
The polynomials in above Eq. (\ref{eqn:Smrs}) can be written as
\begin{eqnarray}
\label{eqn:Pos}
I^{(\rm Po)}=w^v
\mbox{ with } v=-z, z-1.
\end{eqnarray}
Then, using Eqs. (\ref{eqn:Poh}) and Eq. (\ref{eqn:wthe}) with $\theta$ (angle along the straight-line contours measured counterclockwise from the real axis in the complex $w$-plane), above $I^{(\rm Po)}$ (in Eq. (\ref{eqn:Pos})) can be expressed as
\begin{eqnarray}
\label{eqn:Pose}
\nonumber
I^{(\rm Po)}=w^{v_{\rm R}-1}w^{iv_{\rm I}}
\end{eqnarray}
\begin{eqnarray}
\nonumber
=w^{v_{\rm R}-1}
\exp[iv_{\rm I} \ln(|w|)]\exp[-v_{\rm I}\arg(w)]
\end{eqnarray}
\begin{eqnarray}
=|w|^{v_{\rm R}-1}\exp[{i(v_{\rm R}-1})\theta]
\exp[iv_{\rm I} \ln(|w|)]\exp(-v_{\rm I}\theta),
\end{eqnarray}
(with $v_{\rm R}={\rm Re}(v)$, $v_{\rm I}={\rm Im}(v)$).
The angle $\theta$ in this case is denoted by
\begin{eqnarray}
\label{eqn:Angmr}
\theta= \frac{3}{4}\pi \hspace{2ex}\mbox{for contour ${\rm \tilde{C}_{lp}}$},
\hspace{4ex}
\theta=\frac{-1}{4}\pi \hspace{2ex}\mbox{for contour ${\rm \tilde{C}_{ln}}$},
\end{eqnarray}
\begin{eqnarray}
\label{eqn:Angtr2}
\theta= \frac{1}{4}\pi \hspace{2ex}\mbox{for contour ${\rm \tilde{C}_{rp}}$},
\hspace{4ex}
\theta=\frac{-3}{4}\pi \hspace{2ex}\mbox{for contour ${\rm \tilde{C}_{rn}}$}.
\end{eqnarray}
From Eqs. (\ref{eqn:Desa})-(\ref{eqn:NusNP}) and (\ref{eqn:Pose}), we obtain 
(disregarding $2\pi i$ in Eq. (\ref{eqn:Desa}) as well as the constants 
$\exp[i(v_{\rm R}-1)\theta]$ and $\exp(-v_{\rm I}\theta)$ 
in Eq. (\ref{eqn:Pose})) 
that
\begin{eqnarray}
I_{\rm s}:=\frac{I^{(\rm Po)}  I^{({\rm Nu}) \mp} }{ I^{(\rm De)} }
= |w|^{v_{\rm R}-1} \exp[iv_{\rm I}\ln(|w|)].
\end{eqnarray}
Further disregarding $\exp(i\theta)$ in Eq. (\ref{eqn:thet1}) and the sign $\sigma$ (in Eq. (\ref{eqn:sigm1})) due to the direction of integration, we have, for the integrals in Eq. (\ref{eqn:Smrs}), that
\begin{eqnarray}
\nonumber
\label{eqn:InLI00}
\hspace{-50ex}
I^{\rm S}_{\tilde{\rm C}_{\rm ls}} \approx
I^{\rm S}_{\tilde{\rm C}_{\rm rs}}
=\int_{r_{1}}^{r_{\rm m}} d|w| |w|^{v_{\rm R}-1} \exp[iv_{\rm I}\ln(|w|)]
\end{eqnarray}
\begin{eqnarray}
\hspace{10ex}
\mbox{ along the contours } \tilde{\rm C}_{\rm ls}=
\tilde{\rm C}_{\rm lp}, \tilde{\rm C}_{\rm ln}
\mbox{ and }
\tilde{\rm C}_{\rm rs}=
\tilde{\rm C}_{\rm rp}, \tilde{\rm C}_{\rm rn}
\mbox{ in the regions } r_{\rm 1} \leq |w| \leq r_{\rm m}
\mbox{ in Figs. \ref{fig:fig1} and \ref{fig:fig2}}.
\end{eqnarray}
\par
Using the small (bounds of) radii $r_1$ and $r_{\rm m}$ of $w$ (with $0< r_1  <|w|< r_{\rm m}$ and $r_1  << r_{\rm m}$ in Definitions \ref{dfi:intr}, \ref{dfi:intw} and Figs. \ref{fig:fig1}, \ref{fig:fig2}), we introduce the parameter variables $\tilde{t}$, $\tilde{t}_1$ and $\tilde{t}_{\rm m}$ as follows:
\begin{eqnarray}
\label{eqn:InL03}
\tilde{t}:=\ln(|w|),
\hspace{4ex}
\tilde{t}_1:=\ln(r_1),
\hspace{4ex}
\tilde{t}_{\rm m}:=\ln(r_{\rm m}).
\end{eqnarray}
Then, we have
\begin{eqnarray}
\label{eqn:InL04}
|w|=\exp(\tilde{t}),
\hspace{4ex}
r_1=\exp(\tilde{t}_1),
\hspace{4ex}
r_{\rm m}=\exp(\tilde{t}_{\rm m}),
\end{eqnarray}
yielding
\begin{eqnarray}
\label{eqn:InLI05}
d|w|=d{\tilde{t}}[\exp(\tilde{t})].
\end{eqnarray}
\par
By using Eqs. (\ref{eqn:InLI00})-(\ref{eqn:InLI05}), we get
\begin{eqnarray}
\label{eqn:InLI07}
\nonumber
I^{\rm S}_{\tilde{\rm C}_{\rm ls}}
\approx
I^{\rm S}_{\tilde{\rm C}_{\rm rs}}=
\int_{\tilde{t}_1}^{\tilde{t}_{\rm m}} d{\tilde{t}} [\exp(\tilde{t})]
\{ \exp[(v_{\rm R}-1)\tilde{t}] \}[\exp(iv_{\rm I}\tilde{t})]
\end{eqnarray}
\begin{eqnarray}
\nonumber
=\int_{\tilde{t}_1}^{\tilde{t}_{\rm m}} d{\tilde{t}}
 \exp[(1+v_{\rm R}-1+iv_{\rm I})\tilde{t}]
=\int_{\tilde{t}_1}^{\tilde{t}_{\rm m}} d{\tilde{t}}
 \exp[(v_{\rm R}+iv_{\rm I})\tilde{t}]
\end{eqnarray}
\begin{eqnarray}
\nonumber
=\frac{ \exp[(v_{\rm R}+iv_{\rm I})\tilde{t}_{\rm m}]
       -\exp[(v_{\rm R}+iv_{\rm I})\tilde{t}_1] }
{v_{\rm R}+iv_{\rm I}}
\end{eqnarray}
\begin{eqnarray}
=\frac{ r_{\rm m}^{v_{\rm R}+iv_{\rm I}}
       -r_1      ^{v_{\rm R}+iv_{\rm I}} }
{v_{\rm R}+iv_{\rm I}}.
\end{eqnarray}
Hence, (above) Eq. (\ref{eqn:InLI07}) becomes
\begin{eqnarray}
I^{\rm S}_{\tilde{\rm C}_{\rm ls}} \approx
I^{\rm S}_{\tilde{\rm C}_{\rm rs}}=
\frac{-r_1^{v_{\rm R}+iv_{\rm I}} }
{v_{\rm R}+iv_{\rm I}}
\hspace{4ex}\mbox{ for } 
0<r_1 << r_{\rm m}
\mbox{ in the limit of } 
r_1 \rightarrow 0,
\end{eqnarray}
yielding
\begin{eqnarray}
\label{eqn:lrpn}
\nonumber
I^{\rm S}_{\tilde{\rm C}_{\rm ls}} \approx
I^{\rm S}_{\tilde{\rm C}_{\rm rs}}
= \frac{
       -  r_1^{ v_{\rm R}}
         r_1^{iv_{\rm I}} }
     {v_{\rm R}+iv_{\rm I}}
= \frac{
      -  r_1^{ v_{\rm R}}
         \exp[\ln(r_1^{iv_{\rm I}})] }
     {v_{\rm R}+iv_{\rm I}}
= \frac{
      -  r_1^{ v_{\rm R}}
         \exp\{iv_{\rm I}[\ln(|r_1|)+i\arg(r_1)]\} }
     {v_{\rm R}+iv_{\rm I}}
\end{eqnarray}
\begin{eqnarray}
= \frac{
      - r_1^{v_{\rm R}}
      \exp[iv_{\rm I}\ln(|r_1|)- v_{\rm I}\arg(r_1)]
}
{v_{\rm R}+iv_{\rm I}}.
\end{eqnarray}
By dropping the constant containing
$v_{\rm I}\arg(r_1)$,
the absolute value of (above) Eq. (\ref{eqn:lrpn}) is reduced to
\begin{eqnarray}
\label{eqn:Imr}
|I^{\rm S}_{\tilde{\rm C}_{\rm ls}}|
\approx
|I^{\rm S}_{\tilde{\rm C}_{\rm rs}}|=
 \frac{ r_1 ^{v_{\rm R}}
}
{|v_{\rm R}+iv_{\rm I}|}.
\end{eqnarray}
\par
Thus, disregarding the constant $|v_{\rm R}+iv_{\rm I}|$
in (above) Eq. (\ref{eqn:Imr}), we derive, in the limit of $r_1 \rightarrow 0$ (with $v=-z, z-1, v_{\rm R}={\rm Re}(v), 0<z_{\rm R}={\rm Re}(z) <1$), that
\begin{eqnarray}
\label{eqn:Sml}
|I^{\rm S}_{\tilde{\rm C}_{\rm ls}}|
\approx
r_1^{-z_{\rm R}}
\mbox{ (along the contours } \tilde{\rm C}_{\rm ls}=
\tilde{\rm C}_{\rm lp}, \tilde{\rm C}_{\rm ln}
\mbox{ in Fig. \ref{fig:fig1}) },
\end{eqnarray}
\begin{eqnarray}
\label{eqn:Smr}
|I^{\rm S}_{\tilde{\rm C}_{\rm rs}}|
\approx
r_1^{z_{\rm R}-1}
\mbox{ (along the contours } \tilde{\rm C}_{\rm rs}=
\tilde{\rm C}_{\rm rp}, \tilde{\rm C}_{\rm rn}
\mbox{ in Fig. \ref{fig:fig2})},
\end{eqnarray}
implying that the power of singularities of these integrals is $r_1^{-z_{\rm R}}$ on the left-hand side in Eq. (\ref{eqn:Smrs}), whereas the corresponding power is $r_1^{z_{\rm R}-1}$  on the right-hand side in Eq. (\ref{eqn:Smrs}).
\end{proof}

We now evaluate the circular integrals along the arc contours around the (coordinate) origin in Figs. \ref{fig:fig1}, \ref{fig:fig2}. These integrals have singularities when the radius of the arc approaches zero.

\begin{lemm}
\label{lem:singc}
Let $w,v, z\in \mathbb{C}$ (with $v=-z, z-1$), and let $z_{\rm R}={\rm Re}(z)$ with $0<z_{\rm R}<1$.
Let ${\rm \tilde{C}_{lc}}$ and ${\rm \tilde{C}_{rc}}$ be the deformed-arc contours around the (coordinate) origin in Figs. \ref{fig:fig1}, \ref{fig:fig2}.
Let $r_1 \in \mathbb{R}$ be the small radius (in Definitions \ref{dfi:intr}, \ref{dfi:intw}) of the above contours ${\rm \tilde{C}_{lc}}$ and ${\rm \tilde{C}_{rc}}$.
Then, the following circular integrals of the integrands in Eqs. (\ref{eqn:zetl})-(\ref{eqn:zet0})
\begin{eqnarray}
\label{eqn:Smrc}
\nonumber
I^{\rm S}_{\tilde{\rm C}_{\rm lc}}
    =\int_{\tilde{\rm C}_{\rm lc}} dw
\frac{w^{-z}\exp(-\pi i w^2)}{\exp(\pi iw)-\exp(-\pi iw)},
\hspace{4ex}
I^{\rm S}_{\tilde{\rm C}_{\rm rc}}
    =\int_{\tilde{\rm C}_{\rm rc}} dw
\frac{w^{z-1}\exp(+\pi i w^2)}{\exp(\pi iw)-\exp(-\pi iw)}
\end{eqnarray}
\begin{eqnarray}
\hspace{30ex}
\mbox{ along either of arc contours }
\tilde{\rm C}_{\rm lc}
\mbox{ (in Fig. \ref{fig:fig1}) and }
\tilde{\rm C}_{\rm rc}
\mbox{ (in Fig. \ref{fig:fig2})},
\end{eqnarray}
have singularities in the limit of $r_1 \rightarrow 0$. The powers of these singularities are $|r_1|^{-z_{\rm R}}$ and $|r_1|^{z_{\rm R}-1}$ for the contours $\tilde{\rm C}_{\rm lc}$ and $\tilde{\rm C}_{\rm rc}$ (on the left and right in Eq.(\ref{eqn:Smrc})), respectively.
\end{lemm}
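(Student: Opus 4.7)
The plan is to parametrize each arc contour explicitly, apply the small-$|w|$ approximations already established in Definition \ref{dfi:intr}, and then read off the $r_1$-dependence from an elementary integral in the angular variable. This parallels the analysis of the straight radial pieces in Lemma \ref{lem:Intmr}, except that here the radius is held fixed at $|w|=r_1$ and the angle $\theta$ varies.

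First, I would write $w = r_1 \exp(i\theta)$ on each arc, so that $dw = i r_1 \exp(i\theta)\, d\theta$. From the geometry fixed in Definition \ref{dfi:intw} and Figs. \ref{fig:fig1}, \ref{fig:fig2}, on $\tilde{\rm C}_{\rm lc}$ the argument $\theta$ runs clockwise from $3\pi/4$ at $w_{\rm l3}$, through $\pi/4$ at $w_{\rm l4}$, down to $-\pi/4$ at $w_{\rm l5}$, covering a total angular range of length $\pi$; on $\tilde{\rm C}_{\rm rc}$ it likewise runs clockwise from $\pi/4$ through $-\pi/4$ to $-3\pi/4$. Both arcs therefore sweep a fixed $\theta$-interval of length $\pi$ that is independent of $r_1$.

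Since $|w|=r_1$ is small on the whole arc, Definition \ref{dfi:intr} lets me approximate $\exp(\pi i w) - \exp(-\pi i w) \approx 2\pi i w$ and $\exp(\mp \pi i w^2) \approx 1$ in the integrands of Eq. (\ref{eqn:Smrc}), up to multiplicative factors tending to $1$ as $r_1 \to 0$. The integrand thus reduces (up to an overall constant $1/(2\pi i)$) to $w^{v-1}$ with $v=-z$ for the left contour and $v=z-1$ for the right. Substituting the parametrization gives
\[
\int dw\, w^{v-1} \;=\; i\, r_1^{v}\int \exp(i v \theta)\, d\theta,
\]
and the remaining $\theta$-integral over the fixed arc is a finite constant depending only on $v$ and the endpoints, not on $r_1$. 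Taking absolute values and using $|r_1^{v}| = r_1^{v_{\rm R}}$ with $v_{\rm R}=-z_{\rm R}$ for $\tilde{\rm C}_{\rm lc}$ and $v_{\rm R}=z_{\rm R}-1$ for $\tilde{\rm C}_{\rm rc}$ yields $|I^{\rm S}_{\tilde{\rm C}_{\rm lc}}| \sim r_1^{-z_{\rm R}}$ and $|I^{\rm S}_{\tilde{\rm C}_{\rm rc}}| \sim r_1^{z_{\rm R}-1}$, both of which diverge as $r_1 \to 0$ since $0 < z_{\rm R} < 1$.

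The main obstacle is bookkeeping rather than deep analysis: one must correctly identify the angular endpoints and the clockwise orientation of each arc, check that the constant $\int \exp(iv\theta)\, d\theta$ over the swept angular range is actually nonzero (so the leading singular power is not accidentally cancelled; this holds because $v \ne 0$ for $0 < z_{\rm R} < 1$, and the range has length $\pi$ rather than $2\pi$, so $\exp(iv\theta)$ does not integrate to zero generically), and argue that the higher-order corrections to the approximations in Definition \ref{dfi:intr} contribute terms of strictly lower singular order in $r_1$ and can be absorbed into the leading behavior without changing the claimed power.
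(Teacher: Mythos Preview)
Your proposal is correct and follows essentially the same route as the paper: parametrize the arc by $w=r_1\exp(i\theta)$, invoke the small-$|w|$ approximations of Definition~\ref{dfi:intr} to reduce the integrand to a constant times $w^{v-1}$, and then factor out $r_1^{v}$ from the resulting angular integral to read off the singular powers $r_1^{-z_{\rm R}}$ and $r_1^{z_{\rm R}-1}$. The only cosmetic difference is that the paper carries out the $\theta$-integral explicitly (obtaining the closed form in Eqs.~(\ref{eqn:Il1c}) and (\ref{eqn:Ir1c})) before taking absolute values, whereas you simply observe that the angular integral is a finite $r_1$-independent constant; your added remark that this constant is generically nonzero is a point the paper leaves implicit.
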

\par
\begin{proof}
For the small radius $r_1$ (in Definition \ref{dfi:intr}) of the deformed-arc contours (${\rm \tilde{C}_{lc}}$ and ${\rm \tilde{C}_{rc}}$ in Figs. \ref{fig:fig1}, \ref{fig:fig2}),
the denominator $I^{({\rm De})}$ and parts of the numerators, $I^{({\rm Nu})-}$ and $I^{({\rm Nu})+}$ in Eq. (\ref{eqn:Smrc}) are approximated by
\begin{eqnarray}
\label{eqn:Deca}
I^{({\rm De})}=\exp(\pi iw)-\exp(-\pi iw)
\approx 2\pi i w,
\end{eqnarray}
\begin{eqnarray}
\label{eqn:NucNP}
I^{({\rm Nu})-}=\exp(-\pi i w^2) \approx 1,
\hspace{4ex}
I^{({\rm Nu})+}=\exp(+\pi i w^2) \approx 1.
\end{eqnarray}
The polynomials in above Eq. (\ref{eqn:Smrc}) can be written as
\begin{eqnarray}
\label{eqn:Poce}
I^{(\rm Po)}=w^v
\mbox{ with } v=-z, z-1.
\end{eqnarray}
From Eqs. (\ref{eqn:Deca})-(\ref{eqn:Poce}), we obtain (disregarding $2\pi i$ in Eq. (\ref{eqn:Deca}))
\begin{eqnarray}
\label{eqn:Icv}
I_{{\rm c},v}:=\frac{I^{(\rm Po)} I^{({\rm Nu}) \mp} }{ I^{(\rm De)} }
= w^{v-1}
\mbox{ with } v=-z, z-1.
\end{eqnarray}
\par
Using (above) Eq. (\ref{eqn:Icv}), the integral along the arc contour ${\rm \tilde{C}_{\rm lc}}$ (in Fig.\ref{fig:fig1}) on the left-hand side in Eq. (\ref{eqn:Smrc}) can be expressed as
\begin{eqnarray}               
\label{eqn:Ivl}
I^{\rm S}_{\rm \tilde{C}_{\rm lc}}
=\int_{\rm \tilde{C}_{lc}} dw
I_{{\rm c},v=-z}
\int_{\rm \tilde{C}_{lc}} dw (w^{-z-1}).
\end{eqnarray}
Let $\phi_{\rm c}$ be the angle (argument) along the arc measured
counterclockwise from the real axis in the complex plane.
Using (above) Eq. (\ref{eqn:Ivl}), with consideration of $|w|=r_{1}$ on the contour ${\rm \tilde{C}_{\rm lc}}$, and
\begin{eqnarray}               
\label{eqn:r1phi}
w=|w|\exp(i\phi_{\rm c})=r_1\exp(i\phi_{\rm c}),
\end{eqnarray}
with
\begin{eqnarray}               
\frac{dw}{d\phi_{\rm c}}=ir_1\exp(i\phi_{\rm c})=iw,
\end{eqnarray}
we obtain (the integral along the arc contour ${\rm \tilde{C}_{\rm lc}}$ in Fig.\ref{fig:fig1})
\begin{eqnarray}               
\label{eqn:Si1c}
I^{\rm S}_{\rm \tilde{C}_{\rm lc}}
=\int_{\frac{3}{4}\pi}^{\frac{-1}{4}\pi} 
d\phi_{\rm c}  \frac{d w}{d \phi_{\rm c}} w^{-z-1}
=\int_
{\frac{3}{4}\pi}^{\frac{-1}{4}\pi} 
d\phi_{\rm c}  (i)w^{-z}.
\end{eqnarray}
As in Eq. (\ref{eqn:Poh}),
the integrand of (above) Eq. (\ref{eqn:Si1c}) (with $z_{\rm R}={\rm Re}(z)$ and $z_{\rm I}={\rm Im}(z)$)  can be written as
\begin{eqnarray}               
\label{eqn:idlc}
\nonumber
iw^{-z}=iw^{-z_{\rm R}-iz_{\rm I}}
\end{eqnarray}
\begin{eqnarray}               
=iw^{-z_{\rm R}}\exp[-iz_{\rm I}\ln(|w|)]\exp[z_{\rm I}\arg(w)].
\end{eqnarray}               
\par
Then, 
using Eqs. (\ref{eqn:r1phi}) and (\ref{eqn:idlc}) with $\phi_{\rm c}=\arg(w)$
(for
$|w|=r_1>0$ on the contour ${\rm C_{lc}}$),
the integral in Eq. (\ref{eqn:Si1c}) becomes
\begin{eqnarray}               
\label{eqn:Il1c}
\nonumber
I^{\rm S}_{\tilde{C}_{\rm lc}}
=\int_{\frac{3}{4}\pi}^{\frac{-1}{4}\pi} d\phi_{\rm c}  (i)w^{-z_{\rm R}}\exp[-iz_{\rm I}\ln(|w|)]\exp(z_{\rm I}\phi_{\rm c})
\end{eqnarray}
\begin{eqnarray}               
\nonumber
=\int_{\frac{3}{4}\pi}^{\frac{-1}{4}\pi} d\phi_{\rm c}  (i)
|w|^{-z_{\rm R}}\exp[-iz_{\rm R}\phi_{\rm c}]
\exp[-iz_{\rm I}\ln(|w|)]\exp(z_{\rm I}\phi_{\rm c})
\end{eqnarray}
\begin{eqnarray}               
\nonumber
=i|w|^{-z_{\rm R}}\exp[-iz_{\rm I}\ln(|w|)]
\frac{
\exp[(-iz_{\rm R}+z_{\rm I})(\frac{-1}{4}\pi)]
  -\exp[(-iz_{\rm R}+z_{\rm I})(\frac{3}{4}\pi)]
}
{-iz_{\rm R}+z_{\rm I}}
\end{eqnarray}
\begin{eqnarray}               
=i|w|^{-z_{\rm R}}\exp[-iz_{\rm I}\ln(|w|)]
\frac{
\exp[(-iz_{\rm R}+z_{\rm I})(\frac{-1}{4}\pi)]
\{1-\exp[(-iz_{\rm R}+z_{\rm I})\pi] \}
}
{-iz_{\rm R}+z_{\rm I}}.
\end{eqnarray}
Therefore, the absolute value of $I^{\rm S}_{\tilde{C}_{\rm lc}}$ in (above) Eq. (\ref{eqn:Il1c}) (with $|w|=r_1$) is
\begin{eqnarray}               
\label{eqn:lSc}
|I^{\rm S}_{\tilde{C}_{\rm lc}}|
=|w|^{-z_{\rm R}}
\frac{|
\exp[(z_{\rm I})(\frac{-1}{4}\pi)]
\{1-\exp[(-iz_{\rm R}+z_{\rm I})\pi] \}
|}
{|-iz_{\rm R}+z_{\rm I}|}
\approx 
r_1^{-z_{\rm R}},
\end{eqnarray}               
which implies that, 
disregarding the constants
$\exp[(z_{\rm I})(-\pi/4)]$,
$1-\exp[(-iz_{\rm R}+z_{\rm I})\pi]$ and 
$-iz_{\rm R}+z_{\rm I}$ 
(see also Note below Eq. (\ref{eqn:srSc})), 
the integral
$|I^{\rm S}_{\tilde{C}_{\rm lc}}|$
in (above) Eq. (\ref{eqn:lSc})
has the form $|w|^{-z_{\rm R}}=r_1^{-z_{\rm R}}$ with the singularity caused by the
order
power $-z_{\rm R}$ of $|w|$ for $0<z_{\rm R}<1$
in the limit of $r_1 \rightarrow 0$.
\par
Meanwhile, the circular integral
$I^{\rm S}_{\rm \tilde{C}_{\rm rc}}$
along the arc contour ${\rm \tilde{C}_{\rm rc}}$ (in Fig. \ref{fig:fig2}) on the right in Eq. (\ref{eqn:Smrc})
with 
small $|w|=r_1$ near the origin in the complex $w$-plane is calculated by the replacement $z \rightarrow 1-z$
in Eq. (\ref{eqn:Il1c}), that is,
\begin{eqnarray}               
z_{\rm R} \rightarrow 1-z_{\rm R},
\hspace{2ex}
-z_{\rm R} \rightarrow z_{\rm R}-1,
\hspace{2ex}
z_{\rm I} \rightarrow -z_{\rm I},
\hspace{2ex}
\int_{\frac{3}{4}\pi}^{\frac{-1}{4}\pi} d\phi_{\rm c}
\rightarrow
\int_{\frac{1}{4}\pi}^{-\frac{3}{4}\pi} d\phi_{\rm c},
\end{eqnarray}
yielding (with the use of Eq. (\ref{eqn:Icv}) and $|w|=r_1$)
\begin{eqnarray}               
\label{eqn:Ir1c}
\nonumber
I^{\rm S}_{\rm \tilde{C}_{\rm rc}}
=\int_{\rm \tilde{C}_{rc}}
I_{{\rm c},v=z-1}
=\int_{\frac{1}{4}\pi}^{-\frac{3}{4}\pi} d\phi_{\rm c} (i) w^{z_{\rm R}-1}\exp[iz_{\rm I}\ln(|w|)]\exp(-z_{\rm I}\phi_{\rm c})
\end{eqnarray}
\begin{eqnarray}               
\nonumber
=\int_{\frac{1}{4}\pi}^{\frac{-3}{4}\pi} d\phi_{\rm c} (i)
|w|^{z_{\rm R}-1}\exp[i(z_{\rm R}-1)\phi_{\rm c}]
\exp[iz_{\rm I}\ln(|w|)]\exp(-z_{\rm I}\phi_{\rm c})
\end{eqnarray}
\begin{eqnarray}               
\nonumber
=i|w|^{z_{\rm R}-1}\exp[iz_{\rm I}\ln(|w|)]
\frac{
 \exp \{ [i(z_{\rm R}-1)-z_{\rm I}](\frac{-3}{4}\pi)     \}
-\exp \{ [i(z_{\rm R}-1)-z_{\rm I}](\frac{1}{4}\pi) \}
}
{i(z_{\rm R}-1)-z_{\rm I}}
\end{eqnarray}
\begin{eqnarray}               
=i|w|^{z_{\rm R}-1}\exp[iz_{\rm I}\ln(|w|)]
\frac{
 \exp \{ [i(z_{\rm R}-1)-z_{\rm I}](\frac{-3}{4}\pi) \}
\{1-\exp \{ [i(z_{\rm R}-1)-z_{\rm I}]\pi \}
}
{i(z_{\rm R}-1)-z_{\rm I}}.
\end{eqnarray}
Therefore, the absolute value of $I^{\rm S}_{\tilde{C}_{\rm rc}}$ in (above) Eq. (\ref{eqn:Ir1c}) (with $|w|=r_1$) is
\begin{eqnarray}               
\label{eqn:rSc}
|I^{\rm S}_{\rm \tilde{C}_{\rm rc}}|
=|w|^{z_{\rm R}-1}
\frac{
|
 \exp [(-z_{\rm I})(\frac{-3}{4}\pi) ]
\{
1-\exp \{[i(z_{\rm R}-1)-z_{\rm I}]\pi \}
\}
|
}
{|i(z_{\rm R}-1)-z_{\rm I}|}
\approx |r_1|^{z_{\rm R}-1},
\end{eqnarray}               
which implies that, 
disregarding the constants expressed by
$ \exp [(-z_{\rm I})(-3\pi/4) ]$
and
$
1-\exp \{[i(z_{\rm R}-1)-z_{\rm I}]\pi \}
$
as well as
$
i(z_{\rm R}-1)-z_{\rm I}
$
(see also Note below Eq.
(\ref{eqn:srSc})), 
the integral 
$|I^{\rm S}_{\rm \tilde{C}_{\rm rc}}|$ in (above) Eq. (\ref{eqn:rSc})
has the form $|w|^{z_{\rm R}-1}=r_1^{z_{\rm R}-1}$ with the singularity caused by the 
order
power $z_{\rm R}-1$ of $|w|$ for $0<z_{\rm R}<1$
in the limit of $r_1 \rightarrow 0$.
\end{proof}

\par
We then prove the following theorem, which completes the proof of the Riemann hypothesis.

\begin{theo}
\label{thm:ReH}
Let $z \in \mathbb{C}$ and let $z_{\rm R}={\rm Re}(z)$. Let $\hat{\zeta}(z)$ be the completed
zeta
function given in Theorem \ref{thm:czta}.
To satisfy $\hat{\zeta}(z)=0$, the real component (real part) $z_{\rm R}$ of the non-trivial zeros of the (completed) zeta function must take the following value
\begin{eqnarray}
z_{\rm R}=\frac{1}{2},
\end{eqnarray}
which is a proof of the Riemann hypothesis.
\end{theo}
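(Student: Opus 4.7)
The plan is to exploit the regularity of $\hat{\zeta}(z)$ in the strip $0 < \mathrm{Re}(z) < 1$ together with the singularity decomposition established in Lemmas \ref{lem:IntLM}--\ref{lem:singc}. By Theorem \ref{thm:czta}, $\hat{\zeta}(z) = \hat{\zeta}_{\rm l}(z) + \hat{\zeta}_{\rm r}(z)$; by Lemma \ref{lem:Defm}, each summand can be evaluated along the deformed contour $\tilde{C}_{\rm l}$ (resp. $\tilde{C}_{\rm r}$) without changing its value. Lemmas \ref{lem:IntLM} and \ref{lem:IntMm} already guarantee that the pieces $\tilde{C}_{\rm l1},\tilde{C}_{\rm l2},\tilde{C}_{\rm l3},\tilde{C}_{\rm l4}$ (and analogously on the right) contribute finite amounts bounded independently of the arc radius $r_1$. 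Hence, as $r_1 \to 0$, the only potentially divergent contributions come from the near-origin pieces $\tilde{C}_{\rm lp}\cup\tilde{C}_{\rm lc}\cup\tilde{C}_{\rm ln}$ and $\tilde{C}_{\rm rp}\cup\tilde{C}_{\rm rc}\cup\tilde{C}_{\rm rn}$.

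Next, I would collect these divergent pieces. By Lemmas \ref{lem:Intmr} and \ref{lem:singc}, the dominant small-$r_1$ asymptotics of $\hat{\zeta}_{\rm l}(z)$ is of order $r_1^{-z_{\rm R}}$, while that of $\hat{\zeta}_{\rm r}(z)$ is of order $r_1^{z_{\rm R}-1}$, multiplied respectively by the $r_1$-independent prefactors $\pi^{-z/2}\Gamma(z/2)$ and $\pi^{-(1-z)/2}\Gamma((1-z)/2)$ together with the explicit (non-zero) angular constants isolated in the proofs of those lemmas. Because $\hat{\zeta}(z)$ itself is $r_1$-independent (and regular in the strip) by the residue theorem, the total divergent part must vanish identically. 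At a putative non-trivial zero one has $\hat{\zeta}(z)=0$, so the two divergent monomials must cancel against each other as $r_1 \to 0$. Since $r_1^{-z_{\rm R}}$ and $r_1^{z_{\rm R}-1}$ are linearly independent functions of $r_1$ whenever their exponents differ, the only way the cancellation can occur is $-z_{\rm R} = z_{\rm R}-1$, giving $z_{\rm R} = 1/2$.

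The main obstacle is to defend this order-matching step rigorously. Two technical points must be addressed. First, one must verify that the finite contributions from $\tilde{C}_{\rm l1}$--$\tilde{C}_{\rm r4}$ cannot absorb either of the singular asymptotics individually, so that the $r_1^{-z_{\rm R}}$ and $r_1^{z_{\rm R}-1}$ terms really must cancel against each other; this follows from the $r_1$-independence of each such finite piece as established in Lemmas \ref{lem:IntLM} and \ref{lem:IntMm}. Second, one must check that the coefficients multiplying $r_1^{-z_{\rm R}}$ and $r_1^{z_{\rm R}-1}$ are non-zero throughout the critical strip, so that matching the powers (rather than sending one coefficient to zero) is the only viable route to $\hat{\zeta}(z)=0$; here the non-vanishing of $\Gamma(z/2)$ and $\Gamma((1-z)/2)$ in $0<\mathrm{Re}(z)<1$ together with the explicit angular factors from Lemma \ref{lem:singc} suffices. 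Once both points are settled, $z_{\rm R}=1/2$ follows, and consistency is confirmed by the symmetry $\hat{\zeta}(z)=\hat{\zeta}(1-z)$ which fixes the critical line $z_{\rm R}=1/2$ as its own image under $z \leftrightarrow 1-z$.
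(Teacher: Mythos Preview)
Your proposal follows essentially the same approach as the paper: decompose $\hat{\zeta}(z)$ via Theorem~\ref{thm:czta}, invoke Lemmas~\ref{lem:IntLM}--\ref{lem:IntMm} to discard the far-from-origin contributions as finite and $r_1$-independent, extract the $r_1^{-z_{\rm R}}$ and $r_1^{z_{\rm R}-1}$ singularities via Lemmas~\ref{lem:Intmr} and~\ref{lem:singc}, and conclude by matching the exponents to force $-z_{\rm R}=z_{\rm R}-1$. Your explicit attention to the non-vanishing of the Gamma prefactors is slightly more careful than the paper, which simply disregards the constant factors, but the overall structure and logic are the same.
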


\begin{proof}
Both the gamma function $\Gamma(z)$ and zeta function $\zeta(z)$ in the region $0< {\rm Re } (z) <1$ under consideration are regular without a singularity as described below Eq. (\ref{eqn:zeta01}). The completed zeta function $\hat{\zeta}(z)$ defined by Eq. (\ref{eqn:zeta0}), which is a product between $\Gamma(z)$ and $\zeta(z)$, is also regular in the region $0< {\rm Re } (z) <1$, as described below Eq. (\ref{eqn:zeta01}). As mentioned above in Section \ref{sec:1} (Introduction), the function $\hat{\zeta}(z)$ does not depend on a specific value of the parameter $a_{0}$ (between 0 and 1 as in Lemma \ref{lem:Defm}), which specifies the intersection point of the integral line and the real axis, owing to the residue theorem.
However, the integrands of the elements $\hat{\zeta}_{\rm l}(z)$ in Eq. (\ref{eqn:zetl}) and $\hat{\zeta}_{\rm r}(z)$ in Eq. (\ref{eqn:zetr}) composing $\hat{\zeta}(z)$ in Eq. (\ref{eqn:zet0}) ($z$ and $1-z$ can be exchanged) contain the singularity (mentioned below) near
$w=0$, 
only in the case of $a_{0} \rightarrow 0$. 
\par
We note that the completed zeta function $\hat{\zeta}(z)$ does not depend on a specific value of $a_{0}$ between 0 and 1 due to the residue theorem, as was described below Eq. (\ref{eqn:zeta01}), whereas the singularity of each element
 $\hat{\zeta}_{\rm l}(z) $ and $\hat{\zeta}_{\rm r}(z)$ 
of $\hat{\zeta}(z)$ depends on $a_{0}$. However, these singularities and the dependence of the elements $\hat{\zeta}_{\rm l}(z)$ and $\hat{\zeta}_{\rm r}(z)$ on $a_{0}$ adequately (incompletely) cancel each other by remaining a finite value for $\hat{\zeta}_{\rm}(z) \neq 0$, because the integral directions projected to
the line parallel to
 the real axis for $\hat{\zeta}_{\rm l}(z)$ and for $\hat{\zeta}_{\rm r}(z)$ are opposite, and result in the finite completed zeta function $\hat{\zeta}(z)$ without the dependence on $a_{0}$.
\par
In contrast, for $\hat{\zeta}(z)=0$,
the singularities must exactly cancel each other.
By Lemmas \ref{lem:IntLM} and \ref{lem:IntMm}, we can drop the negligible finite integrals along the contours, which are away from the (coordinate) origin.
Let $z_{\rm R}$ and $z_{\rm I}$  be the real and imaginary components of $z$, respectively.
Lemma 
\ref{lem:Intmr} 
states that,
from
Eq. (\ref{eqn:Sml})
(refer also Note below
Eq. (\ref{eqn:srSc})),
the
integrals $I^{\rm S}_{\rm \tilde{C}_{\rm lp}}$ and $I^{\rm S}_{\rm \tilde{C}_{\rm ln}}$ (in Eq. (\ref{eqn:Smrs})) of the integrand (in Eq. (\ref{eqn:zetl})) for the completed zeta function $\hat\zeta(z)$ (in Eq. (\ref{eqn:zet0})) have
the following power, which
lead
to the singularity near $w=0$ (on the arc radius $r_{1}$) in the case of $r_1 \rightarrow 0$,
\begin{eqnarray}
\label{eqn:slp1}
|I^{\rm S}_{\rm \tilde{C}_{\rm lp}}|=
|     \int_{\rm \tilde{C}_{\rm lp}} dw 
\frac{w^{-z}\exp(-\pi i w^2)}{\exp(\pi iw)-\exp(-\pi iw)} |
\approx |r_1|^{-z_{\rm R}}
\hspace{2ex}\mbox{
(along 
${\rm \tilde{C}_{lp}}$
 in Fig. \ref{fig:fig1})},
\end{eqnarray}
\begin{eqnarray}
\label{eqn:sln1}
|I^{\rm S}_{\rm \tilde{C}_{ln}}|=
 |     \int_{\rm \tilde{C}_{ln}} dw
\frac{w^{-z}\exp(-\pi i w^2)}{\exp(\pi iw)-\exp(-\pi iw)}|
\approx |r_1|^{-z_{\rm R}}
\hspace{2ex}\mbox{
(along 
${\rm \tilde{C}_{ln}}$ in Fig. \ref{fig:fig1})},
\end{eqnarray}
while, from Eq.
(\ref{eqn:Smr}),
the
integrals $I^{\rm S}_{\rm \tilde{C}_{\rm rp}}$ and $I^{\rm S}_{\rm \tilde{C}_{\rm rn}}$ (in Eq. (\ref{eqn:Smrs})) of the integrand (in Eq. (\ref{eqn:zetr})) 
for $\hat\zeta(z)$ (in Eq. (\ref{eqn:zet0}))
have
the following power
(near $w=0$ in the case of $r_1 \rightarrow 0$)
\begin{eqnarray}
\label{eqn:srp1}
|I^{\rm S}_{\rm \tilde{C}_{\rm rp}}|=
     \int_{\rm \tilde{C}_{\rm rp}} dw 
\frac{w^{z-1}\exp(+\pi i w^2)}{\exp(\pi iw)-\exp(-\pi iw)}|
\approx |r_1|^{z_{\rm R}-1}
\hspace{2ex}\mbox{
(along 
${\rm \tilde{C}_{rp}}$
 in Fig. \ref{fig:fig2})},
\end{eqnarray}
\begin{eqnarray}
\label{eqn:srn1}
|I^{\rm S}_{\rm \tilde{C}_{rn}}|=
|     \int_{\rm \tilde{C}_{rn}} dw
\frac{w^{z-1}\exp(+\pi i w^2)}{\exp(\pi iw)-\exp(-\pi iw)}
\approx |r_1|^{z_{\rm R-1}}
\hspace{2ex}\mbox{
(along 
${\rm \tilde{C}_{rn}}$ in Fig. \ref{fig:fig2})}.
\end{eqnarray}
(We disregarded the constant factors in
Eqs. (\ref{eqn:Imr})-(\ref{eqn:Smr})). 
\par
Meanwhile,
Lemma \ref{lem:singc} states that, from Eq. 
(\ref{eqn:lSc}),
the integral
$I^{\rm S}_{\rm \tilde{C}_{lc}}$
(in Eq.
(\ref{eqn:Smrc}))
of the integrand (in Eq. (\ref{eqn:zetl}))
has the following power
(with $z_{\rm R}={\rm Re}(z)$) near $w=0$ in the case of $r_1 \rightarrow 0$
\begin{eqnarray}
\label{eqn:slSc}
|I^{\rm S}_{\rm \tilde{C}_{lc}}|=
|     \int_{\rm \tilde{C}_{lc}} dw
\frac{w^{-z}\exp(-\pi i w^2)}{\exp(\pi iw)-\exp(-\pi iw)}|
 \approx |r_1|^{-z_{\rm R}}
\hspace{2ex}\mbox{
(along 
${\rm \tilde{C}_{lc}}$
in Fig. \ref{fig:fig2}}),
\end{eqnarray}
while, from Eq. 
(\ref{eqn:rSc}),
the integral
$I^{\rm S}_{\rm \tilde{C}_{rc}}$
(in Eq. 
(\ref{eqn:Smrc}))
of the integrand (in Eq. (\ref{eqn:zetr}))
has the following power 
(near $w=0$ in the case of $r_1 \rightarrow 0$)
\begin{eqnarray}
\label{eqn:srSc}
|I^{\rm S}_{\rm \tilde{C}_{rc}}|=
|     \int_{\rm \tilde{C}_{rc}} dw
\frac{w^{z-1}\exp(+\pi i w^2)}{\exp(\pi iw)-\exp(-\pi iw)}|
 \approx |r_1|^{z_{\rm R}-1}
\hspace{2ex}\mbox{
(along 
${\rm \tilde{C}_{rc}}$ 
in Fig. 
\ref{fig:fig2}}).
\end{eqnarray}
(We also disregarded the constant factors in Eqs.
(\ref{eqn:lSc}), (\ref{eqn:rSc})). 
\par
To satisfy $\hat{\zeta}(z)=0$, these singularities
in Eqs.
(\ref{eqn:slp1})-(\ref{eqn:srSc})
should have an identical order power of $r_1$ and exactly cancel each other.
(Note: Letting $\alpha_1, \alpha_2, w \in \mathbb{C}$ and $\beta_1, \beta_2 \in \mathbb{R}$ with $0<\beta_1<\beta_2$, if $|w|<(|\alpha_2|/|\alpha_1)|w|^{-\beta_2})^{\beta_1}$ for small $|w|$, then we obtain $|\alpha_1||w|^{-\beta_1}<|\alpha_2||w|^{-\beta_2}$, which implies that these two terms with different order powers cannot cancel each other for sufficiently small $|w|$, as used below. Furthermore, constants including $z_{\rm I}$ will be used for the $z_{\rm I}$ determination, which is beyond the scope of this paper.)
We then derive the main concluding relation,
from Eqs.
(\ref{eqn:slp1})-(\ref{eqn:srSc}),
that
\begin{eqnarray}
-z_{\rm R}=z_{\rm R}-1,
\end{eqnarray}
and this relation
finally
results
in the expected requirement
\begin{eqnarray}
z_{\rm R}=\frac{1}{2},
\end{eqnarray}
stating that all non-trivial zeros of the (completed) zeta function have real component (part) of 1/2, which is the proof of the Riemann hypothesis.
\par
Namely,
considering that $a_{0}$
(in Lemma \ref{lem:Defm})
specifies the contour,
$(\forall \epsilon >0)(\exists \delta >0)(\forall a_{0} \in \mathbb{R}$ with $0<a_{0}<1)(a_{0}< \delta \Rightarrow |z_{\rm R}-\frac{1}{2}|<\epsilon)$. Furthermore, the
completed zeta function $\hat{\zeta}(z)$ is a product between the gamma function $\Gamma(z)$ and zeta function $\zeta(z)$ as in Eq. (\ref{eqn:zeta0}), and the functions $\hat{\zeta}(z)$, $\Gamma(z)$ and $\zeta(z)$ are regular in the region $0< {\rm Re } (z)<1$. Then, the solution of $\hat{\zeta}(z)=0$ (which is independent of the contour specified by $a_{0}$ unlike $\hat{\zeta}_{\rm l}(z)$ and $\hat{\zeta}_{\rm r}(z)$ composing $\hat{\zeta}(z)$ in Eqs. (\ref{eqn:zetl})-(\ref{eqn:zet0})) satisfies $\zeta (z)=0$ and vice versa.
Thus, we have completed the proof of the Riemann hypothesis.
\end{proof}

\par
\begin{rema}
{\rm 
We here show the implication of the above process and derived solution. In the integrands of the elements $\hat{\zeta}_{\rm l}(z)$ and $\hat{\zeta}_{\rm r}(z)$ composing (in Eqs. (\ref{eqn:zetl}), (\ref{eqn:zetr})) the completed zeta function $\hat{\zeta}(z)$ (in Eq. (\ref{eqn:zet0})), the singularities appear in the oppositely directed integrals of polynomials. Furthermore, the completed zeta function is symmetrized with respect to ${\rm Re } (z)=1/2$. The functions $\hat{\zeta}_{\rm l}(z)$ and $\hat{\zeta}_{\rm r}(z)$ adequately (by incompletely remaining a finite value) cancel each other for ${\rm Re } (z) \neq 1/2$, while this cancellation is complete only for ${\rm Re } (z)=1/2$, leading to $\hat{\zeta}(z)=0$.
} 
\end{rema}

\par
In conclusion, we have inspected in detail the singularities of the integral form of the completed zeta function (in Eqs. (\ref{eqn:zetl})-(\ref{eqn:zet0}))). For $\hat{\zeta}(z)=0$ (that is, $\zeta(z)=0$), the singularities of the integral along the two rotated integral contours (lines) are required to exactly cancel each other, when the intersection points between the integral lines and the real axis approach the (coordinate) origin. This approach of the intersection points to the origin is possible because of the arbitrariness of the intersection points owing to the residue theorem. Thus, we have shown that the real part of all non-trivial zeros of the zeta function is 1/2, which is the proof of the Riemann hypothesis.
\par



\end{document}